\numberwithin{equation}{section}
\newcommand{\done}{ {\bf d}_1 }
\newcommand{\R}{\mathbb{R}}
\def\RR{\mathbb R}
\renewcommand{\P}{\mathcal{P}}
\newcommand{\X}{\mathcal{X}}
\newtheorem{proposition}{Proposition}
\newtheorem{theorem}{Theorem}
\newtheorem{definition}{Definition}
\newtheorem{lemma}{Lemma}
\newtheorem{example}{Example}
\newtheorem{assumption}{Assumption}
\def\be{\begin{equation}}
\def\ee{\end{equation}}
\def\bea{\begin{eqnarray}}
\def\eea{\end{eqnarray}}
\begin{document}



\title{Performance bounds for the mean-field limit\\ of constrained dynamics}

\author{Michael Herty \thanks{Department of Mathematics, IGPM, RWTH Aachen University, Templergraben 55
Aachen, 52062, Germany  \texttt{herty@igpm.rwth-aachen.de}
} \qquad Mattia Zanella \thanks{Department of Mathematics and Computer Science, University of Ferrara, Via N. Machiavelli 35 Ferrara, 44121, Italy \texttt{mattia.zanella@unife.it}
}}

\maketitle


\begin{abstract}
In this work we are interested in the mean-field formulation of kinetic models under control actions where the control is formulated through a model predictive control strategy (MPC) with varying horizon. The relation between the (usually hard to compute) optimal control and the MPC approach is investigated theoretically in the mean-field limit. We establish a computable and provable bound on the difference in the cost functional for MPC controlled and optimal controlled system dynamics in the mean-field limit. The result of the present work extends previous findings for systems of ordinary differential equations. Numerical results in the mean-field setting are given.
\end{abstract}



\section{Introduction}	

In recent years many mathematical models of self-organized systems of interacting agents have been introduced in the  literature, see for example \cite{ArmbrusterRinghofer2005aa, BellomoAjmoneMarsanTosin2013aa, BellomoSoler2012aa, CordierPareschiToscani2005aa, CouzinKrauseFranks2005aa, CristianiPiccoliTosin2014,CuckerSmale2007aa, DegondHertyLiu2014aa,
DegondLiuMotsch2013aa, GalamGefenShapir1982aa, HertyPareschi2010aa, HertyRinghofer2011ad, HertyRinghofer2011ac, MotschTadmor2013aa, PareschiToscani2013ab, Toscani2006aa} and the references therein.
The general setting consists of a microscopic dynamics described by systems of ordinary differential equations where the evolution of the state of each agent is influenced by the collective behavior of all  other agents. Examples in those microscopic interacting systems are frequently seen in the real world like: schools of fish, swarm of bees, herds of sheep
and opinion formation in crowds. Of interest is usually the case when the number of agents becomes very large. Here, the qualitative behavior is  studied through a different level of description, i.e. through the introduction of distribution functions whose behavior is governed by kinetic (or fluid--dynamics) partial differential equations. 
\par 
The control mechanisms of self--organized systems has been investigated recently as follow--up questions to the progress in mathematical modelling and simulation. The control of emergent behavior has been studied on the level of the microscopic agents \cite{AlbiPareschiZanella2015bb,AlbiPareschiZanella2015aa} as well as on the level of the kinetic  \cite{AlbiHertyPareschi2014ab,AlbiPareschiZanella2014aa,HertySteffensenPareschi2015aa}
or fluid--dynamic equations \cite{BensoussanFrehseYam2013aa,ColomboPogodaev2012aa,DegondLiuRinghofer2014aa,FornasierSolombrino2013aa}.  The contributions
have to be further distinguished depending on the type of applied control. Without intending to review all literature we give some references on certain classes of control, e.g.,  
sparse control \cite{FornasierPiccoliRossi2014aa}, Nash equilibrium  control   \cite{LasryLions2007aa}, control using linearized dynamics and Riccati equations \cite{HertyRinghofer2011ac,HertySteffensenPareschi2015aa}
or control driven by other external dynamics \cite{AlbiPareschiZanella2014aa,DuringMarkowichPietschmann2009aa}. 
\par 
Here, we focus on a general method to construct a control mechanism, called model predictive control (MPC). MPC utilizes the assumption that agents optimize their  cost functional  {\em not} necessarily over a large time horizon. Instead they determine their (locally best) action by minimizing their cost only over  a short time interval which recedes as time evolves. The methodology of MPC is also called receding horizon control (or instantaneous control when the length of the horizon is equal to one). 
From the modeling point of  view the fact that agents may be able to optimize strategically their trajectories over a small, but finite, interval of time opened several 
connections to socio--economic problems, where each agent, or a portion of them, is influenced in order to force the entire system toward specific patterns. 

MPC has been used in the engineering community for over fifty years, see e.g. \cite{MayneRawlingsRao2000aa,MichalskaMayne1993aa,MichalskaMayne1995aa,Tadmor1992aa} for an overview and further references. However, therein, only a small number of agents $M<\infty$ is  considered and the optimization problems are then studied at the level of ODEs. 
The link between MPC on the level of agents and the MPC on the level of kinetic and fluid--dynamic equations has been subject to recent investigations \cite{AlbiHertyPareschi2014ab,DegondLiuRinghofer2014aa,HertySteffensenPareschi2015aa}, and also the relation between MPC and mean-field games \cite{LasryLions2007aa} has been a subject to recent studies \cite{DegondHertyLiu2014ab}. However, in  {\em all } currently presented  approaches on MPC in relation to mean-field limits the {\em special } case of a receding
time horizon has been considered. While this is computationally advantageous, it is known to have some severe drawbacks: in the case of finitely many agents stability of the controlled system can expected only if the horizon is sufficiently {\em large}, the instability of the controlled system has  been also observed numerically e.g. in \cite{AlbiPareschiZanella2014aa}. 
Further,  MPC  leads to a control that is suboptimal compared 
with the theoretical optimal one, that is a control with infinite control horizon. Except for a very particular case \cite{HertySteffensenPareschi2015aa} 
there is {\em no} result on the relation between the optimal control and the MPC approach in the mean-field limit.

In the case $M<\infty$ there has been recent  progress  on the relation between the time horizon for MPC and 
the stability as well as optimality estimates of MPC controls  \cite{GrimmMessinaTuna2005aa,Grune2009aa,GrunePannekSeehafer2010aa,JadbabaieHauser2005aa}.
In particular an estimate on the difference between MPC 
and optimal control has been given in \cite[Corollary 4.5]{Grune2009aa}.
 The theory therein covers finite and infinite dimensional phase spaces, but still requires the number $M$ of agents to be {\em finite}.  

The main purposes of the present work is to extend the theory presented in \cite{Grune2009aa} to the limit case of infinitely many agents. The goal is to derive the corresponding mean-field results for the optimality estimates under the {\em same} assumptions as in the case $M<\infty.$  While the presentation will cover a general dynamics we exemplify the results on a first--order alignment model, as an extension to models recently presented \cite{AlbiHertyPareschi2014ab,AlbiPareschiZanella2014aa,DegondHertyLiu2014aa}

The rest of the manuscript is organized as follows. First, in Section \ref{sec:example_motivation},  we introduce some notations and results for an exemplified constrained model deriving its mean-field formulation and highlighting the main features of the performance estimate for the MPC approach. In a more general setting, in Section \ref{sec3}, we define the objects of a mean-field optimal control problem subject to a given dynamics proving several estimates in relation to MPC. Here, an example is proposed 
with numerical results, in Section \ref{num},  confirming the theoretical analysis. In Appendix \ref{app:A} and \ref{sec:mean-field} we recall technical details.

\section{Notation and motivating example}\label{sec:example_motivation}
We introduce the notation and exemplify the results obtained in Section \ref{sec3}   
on a simple alignment model \cite{AlbiHertyPareschi2014ab,AlbiPareschiZanella2015aa,CuckerSmale2007aa,PareschiToscani2006aa}.
Let us assume that $M>0$ agents fulfill the dynamics 
\begin{equation}\begin{split}
\dot{x}_i(t)&=\dfrac{1}{M}\sum_{j=1}^M P(x_j(t)-x_i(t)) + u(t), \; i=1,\dots, M, \; t \geq 0,
\end{split} \label{eq:mpc00} \end{equation}
where  $P \geq0 $ is a general interaction function that
 may also depend on variables $\left(x_j\right)_{i=1}^M$  and $x_i=x_i(t) \in \R$ is the state of the $i$th agent at time $t\ge0 $. We denote by 
\begin{equation}\label{eq:fullset}
X(t)=(x_i(t))_{i=1}^{M}, \qquad X_{-i}(t) = (x_j(t))_{j=1, j \not = i }^{M}
\end{equation}
the state of the full system at time $t$ and the state of the all
agents except the $i$th agent, respectively. In the following we will drop the dependence on $t$ whenever the intention is clear. Moreover we assume that initial 
conditions  $X(0)=X_{0}$ are
given. 

The control $u$ is to be determined in order to minimize a given cost functional 
\begin{equation}\label{generalcost}
J^{u}_\infty( X_{0} ) := \int_0^\infty \ell( X(t), u(t) ) dt, 
\end{equation}
where $X(t)$ is the solution to \eqref{eq:mpc00} for the control $u(t)\in U$, with $U\subset\RR$ bounded, and  
initial datum $X(0)=X_0$. In \eqref{generalcost} we introduce a general function $\ell:\R^M\times U\rightarrow \RR $.
Hence, the functional $J$ depends on the initial datum $X_0$ 
as well as the choice of the control $u.$ The dependence of $J$ on the  time horizon is indicated by a subscript $+\infty$, whereas the dependence on the control by the superscript $u$. We assume there exists a solution $u^{*}$ of the optimal control problem 
\begin{equation} \label{cont pb sec1} u^{*} = \textrm{arg}\min_{u}  J^{u}_\infty( X_0 )
\end{equation}
characterized by the Pontryagins Maximum Principle. From the computational point of view this approach is generally too expensive, therefore, a suboptimal approach named model predictive control (MPC) has been proposed. In its simplest form MPC introduces a discretization
in time as follows: let  $t^n=n\Delta t$ with $n=0,\dots,$ and $\Delta t>0$ and set 
$x_{i,n}=x_i(t_n)$ and in analogy to the previous notations $X_n=(x_{i,n})_{i=1}^{M}$. 
(Single-step) MPC with receding time horizon $N$ applies a control $u$ of the type
\begin{equation}\label{uMPC} 
u^{MPC}(t) = \sum\limits_{n=0}^\infty u_n^{MPC} \chi_{[t^{n},t^{n+1})}(t).
\end{equation}
The unknown control actions $u_n^{MPC} \in \R$ are determined at each time $t^{n}$ by 
\begin{equation}\label{MPCsec1} 
u_n^{MPC} =  v_1
\end{equation}
where $(v_k)_{k=1}^{N}$ are the solutions of the following auxiliary minimization problem
\begin{equation}\label{eq:mf 1b}
\left(v_k\right)_{k=1,\dots,N}=\textrm{arg}\min_{ (v_k)_{k=1}^{N} 
}  \Delta t  \; \sum_{k=1}^N \ell( Y_k  ,v_k) \; \mbox{ subject to } \eqref{eq:align_cons:2},
\end{equation}
where the states $Y_k, \; k=1,\dots,N,$ are given by the dynamics \eqref{eq:align_cons:2} for an initial value $X_n$ and a time horizon $N$, i.e., for each $k=1,\dots, N$
\begin{equation}\label{eq:align_cons:2}
y_{i,k+1} = y_{i,k}+\dfrac{\Delta  t}{M}\sum_{j=1}^M P(y_{j,k}-y_{i,k})+\Delta t v_k, \qquad y_{i,1} = x_{i,n}.
\end{equation}
Observe that the discretization of the cost functional $J_\infty^{u}(X_0)$ is now 
\begin{equation}
J_\infty^{u}(X_0) = \sum\limits_{n=0}^{\infty} \ell( X_n, u_n).
\end{equation}
For simplicity we denote its discretized version by the same letter as the continuous functional \eqref{generalcost}. Moreover in the introduced notations the case $N=2$ corresponds to instantaneous control \cite{AlbiHertyPareschi2014ab,AlbiPareschiZanella2014aa,DegondHertyLiu2014ab,DegondLiuRinghofer2014aa}.

A first obvious relation between the optimal control and the control introduced through a model predictive approach is the following
\begin{equation}
J^{u^{MPC}}_\infty(X_0) \geq J^{u*}_\infty(X_0).
\end{equation}
Part of the investigation in \cite{Grune2009aa} is related to a result to establish an {\bf upper bound } on  
$J^{u^{MPC}}_\infty$ by a multiple of $J^{u^*}_\infty$, in particular the result \cite[Theorem 4.2]{Grune2009aa} proves that such a multiplicative factor can be obtained and depends in particular on the optimization horizon $N$ and on the decay rate of the function $\ell(\cdot,\cdot)$. The result of the aforementioned work leads to an estimate at the ODE level of the type
\begin{equation}\label{eq:estimate}
\alpha_N J_\infty^{u^*}(X_0) \leq \alpha_N  J_\infty^{u^{MPC}_N}(X_0) \leq J_\infty^{u^*}(X_0),
\end{equation}
for some $0<\alpha_N \leq 1$. Where we indicated the dependence of $u^{MPC}$ on the time horizon  in problem \eqref{eq:mf 1b} by the subscript $N$ on the control. Further, an 
estimate $\alpha_N  J_\infty^{u^{MPC}_N}(X_0) \leq J_N^{u^{*}}(X_0) $ has been established as an additional result in \cite[Corollary 4.5]{Grune2009aa}. 
Here, $J_N^{u^{*}}$ is defined as in equation \eqref{generalcost} but for a finite time horizon $T=N \Delta t$.
An estimate on the crucial constant $\alpha_N$ is provided e.g. in \cite{GrunePannekSeehafer2010aa}.

We are interested in a corresponding result in the case of a large number of agents, that is in the limit $M\to\infty.$ In fact, according to Theorem \ref{Theorem2.1Card}, the mean-field limit $M\to\infty$ of the dynamics described in \eqref{eq:mpc00} and \eqref{eq:align_cons:2} exists, and formal computations are given in Appendix  \ref{sec:mean-field}. As an example, consider in  the special  function $\ell: \R^{M} \times U \to \R$ 
\begin{equation}
\ell( X, u ) = \frac{1}2 \left( \frac{1}M \sum\limits_{j=1}^M x_j \right)^2 + \frac{\nu}2 u^{2},
\end{equation}
for some regularization parameter $\nu>0.$ Then, the  limit $M\to\infty$ of $\ell$  exits and is
given by 
\begin{equation}
\tilde{\ell}(f,u) = \frac{1}2 \left( \int_{\R} y f_k(y) dy \right)^{2} + \frac{\nu}2 u^{2}
\end{equation}
with $\tilde{l}:\P(\R) \times U \to \R$, where
 $\P(\R)$ denotes the probability measures on $\R$.

Let us consider the dynamics \eqref{eq:align_cons:2}  and denote by $y \to f_k(y)$ the agent probability density at time $t_k$ with $f_k(\cdot) \in \P(\R)$ for $k=1,\dots,N$.  
The limiting equation corresponding to the microscopic dynamics in \eqref{eq:align_cons:2} for $M\to\infty$ and a.e. $y \in \R$ reads  
\begin{equation}\label{eq:mf 2a}
f_{k+1}(y) = f_k(y) - \Delta t \partial_y \int_{\X} P \left( z-y \right) f_k(z) f_k(y) dz -\Delta t v_k  \partial_y  f_k(y), \quad f_1(y) = h_n(y).
\end{equation}
The probability distribution  $h_n(\cdot) \in \P(\R)$ is the distribution $h(t_n)$ at time $t^{n}$ obtained by propagation of the mean-field limit of the original dynamics \eqref{eq:mpc00}, i.e. for each $t \geq 0$
\begin{equation}\label{kinetic eq} \partial_t h(t,x) + \partial_x \left( \int_{\X} P (z-x) h(t,z)h(t,x) dz - u(t) h(t,x) \right)=0. \end{equation} 
In equation \eqref{kinetic eq} $u(\cdot) = u^{MPC}(\cdot)$ is the control obtained by the MPC approximation \eqref{uMPC}
and \eqref{MPCsec1}. The initial state $h(t,0)=h_0(x)$ is obtained as the  probability distribution corresponding to 
the mean-field limit of  the initial data to \eqref{eq:mpc00}. The control $(v_k)_{k=1}^N$ in equation \eqref{MPCsec1} is determined 
by solving the corresponding mean-field optimization problem, i.e., 
\begin{equation}\label{eq:mf 2b}
(v_k)_{k=1,\dots,N} = \textrm{arg}\min_{ (v_k)_{k=1}^{N} 
} \Delta t \; \sum_{k=1}^N \tilde{\ell}( f_k(\cdot)  ,v_k), \mbox{ subject to } \eqref{eq:mf 2a}.
\end{equation}

As usual, the discrete dynamics is recovered by substituting the discrete measure 
$m^M_{X_i}$ in the weak form of the equation. Here,   $\delta$ denotes the Dirac-$\delta$ measure 
and  $m^M_\xi \in \mathcal{P}(\R)$ is defined by 
\begin{equation}
m^M_\xi(x) = \frac{1}M \sum\limits_{i=1}^M \delta(x-\xi_i).
\end{equation}
We refer to \cite{CarrilloFornasierToscani2010aa,DiFrancescoRosiniArchive2015aa,MotschTadmor2011aa}
for rigorous results and more details on the mean-field limit.  As an example, 
note that the mean-field limit  $J_\infty^{u}: \P(\R) \to \R$  is 
\begin{equation}
\tilde{J}^{u}_\infty(h_0) = \int_0^{\infty} \tilde{\ell}( h(t,\cdot), u(t) ) dt,
\end{equation}
where $h$ is determined by equation \eqref{kinetic eq} with initial condition $h_0 \in \P(\R).$  
As before a horizon of $N=\infty$, corresponding to the optimal case, is desirable but computationally inefficient. 
In the sequel we want to establish the estimate \eqref{eq:estimate} also for
the mean-field cost functional $\tilde{J}^{u}.$ Except for the assumptions required
to derive the mean-field limit we only enforce 
the assumptions of \cite[Theorem 4.2]{Grune2009aa} and we will show how
those are sufficient to derive the corresponding estimates. Also, we will justify by obtaining the 
suitable meanfield limits the previously outlined recipe for MPC meanfield control for a broader
class of agent dynamics.

\section{ Optimality estimate for the mean-field cost functional using MPC approach }\label{sec3}
We will follow the approach described in \cite{Grune2009aa,GrunePannekSeehafer2010aa} with applications to the infinite dimensional mean-field case 
taking first into account a discretized system of ordinary differential equations. 
\par 
Let us consider a homogeneous time discretization for $\dot{x}_i = g(x_i(t), X_{-i}(t)) + u(t)$ given by 
\begin{equation}\label{eq:dynamic_general}
x_{i,n+1}=x_{i,n} + \Delta t g\left(x_{i,n},X_{-i,n}\right) + \Delta t u_n
\end{equation}
where $g:\R^{M}  \to \R$ is a general differentiable function that depends on the state of the $i$th agent and
 on the states of other  agents\eqref{eq:fullset}.  Also, $\Delta t=t^{n+1}-t^n>0$ and for simplicity we assume $\Delta t = 1$. Let us suppose that $g$ fulfills the assumptions 
of \cite[Section 4]{BlanchetCarlier2014aa}, see also Appendix \ref{app:A}. In order to pass to the mean-field limit 
we require that each agent trajectory $x_{i,n}$ belong to a compact subset $\X$ of $\R$ for all $n.$ 
Let $U,\X$ be  compact subset of $\R.$ Then, we assume that for $x_{i,0}\in \X$ and $u_n \in U$ we have $x_{i,n}\in X$ 
for each $i=1,\dots,M.$  Then, according to \ref{Theorem2.1Card}  
there exists a function $$G:\X \times \P(\X) \to \R$$  such that the sequence $$G_M(x_{i,n},m^{M}_{X_{-i,n}}) = g(x_{i,n},X_{-i,n})$$ converges
toward $G$ in the limit $M\to \infty$. For the precise definition of $(G_M)_M$ with $G_M:\X\times \P(\X) \to \R$ we refer to equation \eqref{def-glimit}. 
This allows to obtain that the particle density function  $f_n\in\mathcal{P}(\X)$ satisfies the semi--discrete
partial differential equation in strong form
\begin{equation}\label{eq:mf_control}
f_{n+1}(x) = f_n(x) - \partial_x [G(x,f_n(x))f_n(x)] - \partial_x [f_n(x) \; u_n],
\end{equation}
for a given initial distribution $f(0)=f_0\in\mathcal{P}(\X)$. We denote the set of admissible control sequences $(u_n)_{n\in \mathbb{N}_0}$ 
with   with $u_n\in U \subset \R$   by 
 $\mathcal{U}$.   In the following we will always assume that for any given initial distribution $f_0 \in \P(\X)$ and control $u=(u_n)_n$, there exists a sequence of sufficiently regular functions $(f_n)_{n\in\mathbb{N}_0}, f_n\in \mathcal{P}(\X)$, given by the dynamics described in \eqref{eq:mf_control}. This sequence depends on the initial distribution $f_0$ and on the choice of the control sequence  $u=(u_n)_n$.  

\begin{definition}
The {infinite horizon mean-field cost} $J_{\infty}^u:\mathcal{P}(\X)\rightarrow \RR^+_0$ is denoted by  
\begin{equation}\label{general mf cost}
J_{\infty}^u(f_0)=\sum_{n=0}^{+\infty}\ell(f_n,u_n),
\end{equation}
where $l :\mathcal{P}(\X)\times U\rightarrow \mathbb{R}^+_0$ is the running cost function  and 
where $(f_n)_n, f_n:\P(\X)\to\R,$ is the solution to equation \eqref{eq:mf_control} with 
initial ditribution $f_0 \in \P(\X)$ and given control sequence $u=(u_n)_n$. 
\end{definition}

\begin{example} 
Consider the discrete problem \eqref{eq:align_cons:2}. Let the cost functional be given by a discretization of \eqref{generalcost} with $\ell$ as in the previous section: 
\begin{equation*}\label{eq:LX_nu_n}
\ell( X_n ,u_n)=\dfrac{1}{2}\left(\dfrac{1}{M}\sum_{i=1}^M x_{i,n}\right)^2+\dfrac{\nu}{2} u_n^2
\end{equation*}
for some fixed parameter $\nu>0.$ The function $\ell$ is symmetric in $X_n.$ Provided that $x_{i,n} \in \X, u_n \in U$, 
 we obtain that $\ell$ is uniformly bounded independently on $M$, i.e. $ \| \ell(\cdot,\cdot) \|_\infty \leq C_0.$  Further, $\ell$ is locally Lipschitz-continuous in $X_n$ 
as composition of locally Lipschitz continuous functions. In fact let $x_{i,n},y_{i,n} \in \X$, then we can compute 
\begin{equation*}\begin{split}
\left | \dfrac{1}{2} \left(\dfrac{1}{M}\sum_{i=1}^M x_{i,n}\right)^2+\dfrac{\nu}{2} {u_n}^2-\dfrac{1}{2}\left(\dfrac{1}{M}\sum_{i=1}^M y_{i,n}\right)^2-\dfrac{\nu}{2} {u_n}^2\right | 
 \leq \frac{2C_1}M \left | \sum\limits_{i=1}^M (x_{i,n}-y_{i,n} ) \right |,
\end{split}\end{equation*} 
with $C_1\ge 0$ the Lipschitz constant. 
Therefore, $\ell(\cdot,\cdot)$ fulfills as function of $X$ the assumptions of Theorem \ref{Theorem2.1Card} and its mean-field limit exists and is given by 
\begin{equation}
\ell(f_n,u_n) = \frac{1}2 \left( \int_{\X} x f_n(x) dx \right)^2 + \frac{\nu}2 u_n^2.
\end{equation}
\\
\end{example} 

The previous example shows that the cost functional \eqref{generalcost}  requires strong symmetry assumptions. This is fulfilled for example if it depends on functions of average quantities of the state of the particles. Under the symmetry assumption we  expect to extend the results proposed in \cite{Grune2009aa}. Therefore,  we require in the following that the running cost $\ell$ is symmetric with respect to each agent, that the running costs are uniformly bounded and Lipschitz continuous with respect to the distance ${\bf d}_1$, defined in Appendix \ref{app:A}.

Let us now introduce the notion of optimal value-function, in the mean-field setting, and show a first result.
\begin{definition}\label{def2}
We denote by  $V_\infty: \P(\X) \to \R$ the  \emph{optimal value function}  of  the mean-field control problem \eqref{eq:mf_control}  associated with the 
 infinite horizon cost $J_{\infty}^u(f_0):$ 
\begin{equation}
V_{\infty}(f_0)=\inf_{u\in\mathcal{U}} J^u_{\infty}(f_0).
\end{equation}
\par 
We define the \emph{approximate optimal cost} $J_N^u:\P(\X) \to \R$ with optimization horizon $N$ as 
\begin{equation}\label{receding horizon cost}
J_N^u(f_0)=\sum_{n=0}^{N-1}\ell(f_n,u_n).
\end{equation}
  The approximate value function $V_N(f_0): \P(\X)\to\R$ 
in the case of receding horizon strategy is defined by
\begin{equation}\label{optimal :receding horizon cost}
V_N(f_0)=\inf_{u\in\mathcal{U}}J^u_N(f_0,u).
\end{equation}
\end{definition}

Further we introduce the notion of a feedback law. A feedback law for $M$ agents is a mapping $\mu_M: \X^{M} \to U$. A symmetric feedback law is a feedback law such that for all $X \in \X^{M}:$ $\mu_M(X)=\mu_M( (x_i)_{\sigma(i)} )$ and any permutation $\sigma\in S_M$, with $S_M$ the symmetric group of degree $M$ 
\begin{equation}
\sigma =
\left(
\begin{matrix}
1 & 2 & \dots & M \\
\sigma(1) & \sigma(2) & \dots & \sigma(M)
\end{matrix}
\right).
\end{equation} 
As for the running cost $\ell$, we further assume that the feedback law $\mu_M$ is symmetric, uniformly bounded and Lipschitz continuous
with respect to ${\bf d}_1$.

We now establish an estimate of the type \eqref{eq:estimate} in the mean-field case. 
Note that the result in \cite{Grune2009aa} alreadys covers the case of a cost functional
\eqref{general mf cost} and \eqref{eq:mf_control}. Therefore, our purposes
is to derive the estimate \eqref{eq:estimate} starting from the finite discrete dynamics 
\eqref{eq:dynamic_general} and in the mean-field limit case $M\to\infty.$

\begin{proposition}\label{prop1}
Let us consider a set of $M$ agents which evolve according to the microscopic dynamics \eqref{eq:dynamic_general} with known initial data $(x_{i,0})_{i=1}^M$. 
Consider the functions $\ell_M: {\X}^{M}\to\RR,$ and $\tilde{V}_M: \X^M \times \mathcal{U} \to \RR$,
and a symmetric feedback $\mu_M: \X^{M} \to U$, fulfilling the assertions of Theorem \ref{Theorem2.1Card} and Definition \ref{def2}.
\par 
Assume furthermore that $\tilde{V}_M$ fulfills  for all $X_0\in \X^{M}$ the inequality
\begin{equation}\label{eq:ineq_1}
\tilde{V}_M(X_0)\ge \tilde{V}_M\left(  \left(x_{0,i} + \Delta t \left( g(x_{i,n},X_{-i,n}) + \mu_M(X_0) \right) \right)_{i=1}^{M} \right)+\alpha \ell_M(X_{0}, \mu_M(X_0) ) 
\end{equation}
with $\alpha\in(0,1]$. Then, there exists a function $\tilde{V}: \P(\X) \to \R$ as mean-field
limit of $\tilde V_M$ for $M\to \infty$ 
 such that for all $f\in \P(\X)$  we obtain 
\begin{equation}
\alpha V_{\infty}(f)\le \alpha J^{u}_{\infty}(f)\le \tilde{V}(f).
\end{equation} 
for $u=(u_n)_n, u_n=\mu(f_n),$ where $\mu$ is the mean-field limit of $(\mu_M)_M.$
\end{proposition}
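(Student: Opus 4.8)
The plan is to obtain the mean-field estimate by passing to the limit $M\to\infty$ in the finite-agent inequality \eqref{eq:ineq_1}, and then to combine the limiting inequality with a telescoping/induction argument along the controlled trajectory, exactly as in \cite[Theorem 4.2]{Grune2009aa} but now on the level of probability measures. First I would fix an initial distribution $f_0\in\P(\X)$ and, using the mean-field limit result of Theorem \ref{Theorem2.1Card}, choose initial data $(x_{i,0})_{i=1}^M\subset\X$ whose empirical measure $m^M_{X_0}$ converges to $f_0$ in $\mathbf d_1$. Under the standing symmetry, uniform boundedness and Lipschitz assumptions on $\ell_M$, $\tilde V_M$ and $\mu_M$, the empirical measures $m^M_{X_n}$ generated by the discrete dynamics \eqref{eq:dynamic_general} with feedback $u_n=\mu_M(X_n)$ converge (for each fixed $n$) to the solution $f_n$ of \eqref{eq:mf_control} with control $u_n=\mu(f_n)$; here I would invoke the convergence $G_M\to G$ from \eqref{def-glimit} together with a Gronwall-type stability estimate in $\mathbf d_1$ over the finite number of steps.

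Next I would pass to the limit in \eqref{eq:ineq_1}: the left-hand side $\tilde V_M(X_0)\to\tilde V(f_0)$ by definition of the mean-field limit $\tilde V$ of $(\tilde V_M)_M$; the argument of $\tilde V_M$ on the right-hand side is precisely the one-step push-forward of $m^M_{X_0}$ under the dynamics, whose limit is $f_1$, so that term converges to $\tilde V(f_1)$ by continuity of $\tilde V$ with respect to $\mathbf d_1$; and $\ell_M(X_0,\mu_M(X_0))\to\ell(f_0,\mu(f_0))=\ell(f_0,u_0)$ by the Lipschitz/continuity hypothesis on $\ell$ and $\mu$. This yields, for every $f\in\P(\X)$,
\begin{equation}\label{eq:mf-relaxed}
\tilde V(f)\ge \tilde V(f_1)+\alpha\,\ell(f,\mu(f)),
\end{equation}
where $f_1$ denotes the one-step evolution of $f$ under \eqref{eq:mf_control} with control $\mu(f)$.

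Then I would iterate \eqref{eq:mf-relaxed} along the closed-loop trajectory $(f_n)_n$ emanating from $f_0$ with $u_n=\mu(f_n)$: applying the inequality at $f=f_n$ gives $\tilde V(f_n)\ge \tilde V(f_{n+1})+\alpha\,\ell(f_n,u_n)$, and summing for $n=0,\dots,K-1$ telescopes to $\tilde V(f_0)\ge \tilde V(f_K)+\alpha\sum_{n=0}^{K-1}\ell(f_n,u_n)$. Since $\ell\ge 0$ and $\tilde V\ge 0$ (or at least bounded below), letting $K\to\infty$ produces $\tilde V(f_0)\ge\alpha\sum_{n=0}^\infty\ell(f_n,u_n)=\alpha J^u_\infty(f_0)$. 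Finally, $J^u_\infty(f_0)\ge V_\infty(f_0)$ by the definition of the optimal value function as an infimum over admissible controls, which closes the chain $\alpha V_\infty(f)\le\alpha J^u_\infty(f)\le\tilde V(f)$.

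The main obstacle I expect is the limit-passage step, specifically the joint interchange of the limit $M\to\infty$ with the (implicitly defined) value functions $\tilde V_M$ and with the feedback-closed dynamics. One must ensure that the approximating sequence $\tilde V_M$ genuinely admits a mean-field limit $\tilde V$ that is $\mathbf d_1$-continuous, and that the empirical-measure convergence is uniform in the (finitely many) iterations needed — this is where the symmetry and Lipschitz hypotheses on $\ell_M$, $\mu_M$ and $g$ (via Theorem \ref{Theorem2.1Card} and the assumptions of \cite[Section 4]{BlanchetCarlier2014aa}) do the real work, controlling both the propagation of chaos and the stability of the discrete flow. Establishing \eqref{eq:mf-relaxed} for \emph{all} $f\in\P(\X)$, not merely for $f_0$, requires that the approximating empirical data can be chosen to converge to an arbitrary target measure, which is standard but should be stated carefully.
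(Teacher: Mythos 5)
Your proposal is correct and follows essentially the same route as the paper: extract the mean-field limits $\tilde V$, $\ell$, $\mu$ via Theorem \ref{Theorem2.1Card}, pass to the limit $M\to\infty$ in the discrete inequality \eqref{eq:ineq_1} using the convergence of empirical measures along the one-step dynamics, then telescope the resulting relaxed dynamic programming inequality along the closed-loop trajectory and let $K\to\infty$. The only cosmetic difference is that the paper makes the limit passage concrete through the explicit extension ${\bf V}_M(f)=\inf_{X}\{V_M(X)+\omega(\mathbf d_1(m^M_X,f))\}$ with uniform modulus of continuity $\omega$, whereas you argue via density of empirical measures and $\mathbf d_1$-continuity of the limit, which amounts to the same thing.
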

\begin{proof}
Due to the assertion of Theorem \ref{Theorem2.1Card} we have $\tilde{V}$,  $\ell:\P(\X)\times U\to \R$ and $\mu:\P(\X) \to U$ exist.  Further, we obtain  for $f_0 \in \P(\X)$ 
(as limit for $M\to \infty$ of the sequence $(m^{M}_{X_0})_M$) the corresponding 
inequality for $\tilde{V}$
\begin{equation}\label{ineq:prop1}
\tilde{V}(f_0) \ge \tilde{V}(f_0 - \partial_x[f_0G(x,f_0)] - \partial_x [f_0\; \mu(f_0)]) +\alpha \ell(f_0,\mu(f_0) ).
\end{equation}
In fact for all $i=1,\dots,M$ and all $M$
\begin{equation}\label{pf:dynamics}
x_{1,i}=x_{0,i}+ \Delta t\;g(x_{i,n}, X_{-i,n}) + \Delta t \: \mu_M( X_0 ),
\end{equation}
which corresponds in the mean-field limit to
\begin{equation}
f_1 = f_0 - \partial_x[f_0G(x,f_0)] - \partial_x [f_0\; \mu(f_0)].
\end{equation}
The mean-field limit $\tilde{V}$ is obtained as limit of the 
sequence  ${\bf V}_M: \P(\X)\to \R$ where
\begin{equation}{\bf V}_M( f ) = \inf\limits_{ X \in \X^{M} } \{ V_M(X) + \omega( {\bf d}_1( m^{M}_X, f ) ) \},
\end{equation}
see Theorem \ref{Theorem2.1Card}. We therefore have  
${\bf V}_M( m_{X}^{M} ) = V_M(X)$ and therefore for all $X_0 \in \X^{M}$ 
$$ {\bf V}_M(   m_{X_0}^{M} ) \geq {\bf V}_M( m_{X_1}^{M} ) + \alpha {\bf \ell}_M( m_{X_0}^{M}, \mu_M(m_{X_0}^{M}) ).$$
Further, ${\bf V}_M$ has modulus of continuity $\omega,$ i.e., 
$ | {\bf V}_M( f) - {\bf V}_M( g) | \leq \omega( {\bf d}_1( f,g ) ).$
Let $f_0 \in \P(\X)$ be the limit  of $m^{M}_{X_0}$ for $M\to \infty.$ 
Note that the limit exists for metric ${\bf d}_1$ on the probability measures, 
since $\X$ is compact subset of $\R$ and therefore
 $m^{M}_{X_0}$ has finite 1--Wasserstein distance, i.e., $\int_\X |x| d m^{M}_{X_0} < C$
  with $C$ independent of $M$ and $X_0.$  Due to the dynamics \eqref{pf:dynamics}
we have $f_1$ is then the limit of $m^{M}_{X_1}$, $X_1$ given by \eqref{pf:dynamics}. 
Since ${\bf V}_M$ has    modulus of continuity $\omega,$ we obtain 
$$ {\bf V}_M(   f_0 ) \geq {\bf V}_M( f_1 ) + \alpha {\bf \ell}_M( f_0, \mu_M(f_0) ).$$
Hence, we have 
$$\tilde{V}(f_0) \ge \tilde{V}(f_1) + \alpha \ell(f_0,\mu(f_0)).$$
Define now $u_n = \mu(f_n)$ and consider the solution to \eqref{eq:mf_control}. Since $X_0 \in \X^M$ is arbitrary we obtain that \eqref{ineq:prop1} holds for all $f_0 \in \P(\X)$ 
and therefore  
\begin{equation}
\tilde{V}(f_{n}) \ge \tilde{V}(f_{n+1}) + \alpha \ell(f_n,\mu(f_n)).
\end{equation}
Summation over  $n$ yields
\begin{equation}
\alpha\sum_{n=0}^{K-1}\ell(f_n,u_n)\le \tilde{V}(f_0)-\tilde{V}(f_K) \leq \tilde{V}(f_0).
\end{equation}
Let now $K\to \infty,$ then $\tilde{V}(f_0)$ is an upper bound for 
$ J^{u}_\infty = \sum_{n=0}^{\infty}\ell(f_n,u_n)$
and where $u_n= \mu(f_n).$ Since $u_n$ is an admissible control we obtain for all $f_0\in \P(\X)$
\begin{equation}
\alpha V_{\infty}(f_0)\le \alpha J^u_{\infty}(f_0)\le \tilde{V}(f_0),
\end{equation}
our assertion as limit of discrete measures.
\end{proof}

The previous results holds for any  family of functions $\tilde{V}_M$ and any symmetric feedback
law. The idea is now to establish the inequality in \eqref{eq:ineq_1} for a general MPC strategy and 
a family of functions $\tilde{V}_M$ given by the optimal running costs $V_N$ as in
Definition \eqref{def2}. In order to establish equation \eqref{eq:ineq_1} for a broad
class of running costs $\ell$, the  functions $\rho,\beta$ have been introduced
in Section 3 in \cite{Grune2009aa}. We recall their definition and assertions in Definition \ref{mhdef} below.
Under Assumption \ref{ass:beta} we prove that 
$\mu=u^{MPC}_N$ and $V_N$ fulfill the assertions of Proposition \ref{prop1}. 
The Assumption \ref{ass:beta} is the mean-field analogous to the assumption imposed in \cite[Assumption 3.1]{Grune2009aa}.

\begin{definition}\label{mhdef}
We  say that a function  $\rho: \RR^+\rightarrow \RR^+$ is of class $\mathcal{K}_{\infty}$ if
\begin{itemize}
 \item[$(i)$]$\rho(0)=0$, 
 \item[$(ii)$] $\rho(\cdot)$ is strictly increasing
 \item[$(iii)$] $\rho(\cdot)$ is unbounded. 
 \end{itemize}
Moreover a continuous function $\beta: \RR^+\times\RR^+\rightarrow \RR^+$ is of class $\mathcal{KL}_0$,  if 
 $\forall r>0$ we have $\displaystyle\lim_{r\rightarrow +\infty}\beta(r,t)=0$ and for each $t\ge 0$ we either have
$\beta(\cdot,t)\in\mathcal{K}_{\infty}$ or (b) $\beta(\cdot,t)\equiv 0.$
\end{definition}
We will denote by $\ell^*(f)$ the minimum of the mean-field running cost $\ell$ and as in \cite{Grune2009aa}
we assume it exists
\begin{equation}\label{ellstar}
\ell^*(f)=\min_{u\in\mathcal{U}} \; \ell(f,u).
\end{equation}

\begin{assumption}[]\label{ass:beta}
We assume that $\ell^*(f)$ is well--defined for all $f \in \P(\X).$ Further, for given $\beta\in\mathcal{KL}_0$ and
 each $f_0\in\mathcal{P}(\X)$ ,there exists a sequence of controls $(u_n)_n, u_n\in\mathcal{U}$ depending only on  $f_0$ such that for each $n$ we have
\begin{equation}
\ell(f_n,u_n)\le \beta(\ell^*(f_0),n).
\end{equation}
\end{assumption}

 
 In the following Lemma we prove that Assumption 
\ref{ass:beta} is fulfilled provided that the finite--dimensional problem fulfills the 
corresponding assumption \cite[Assumption 3.1]{Grune2009aa}. We establish
the proof in the special case of $\beta$  given by 
\begin{equation}\label{eq:exp_controllability}
\beta(r,n)=C\sigma^n r,
\end{equation}
where $C\ge 1$ is the overshoot constant and $\sigma\in(0,1)$ the decay rate. Clearly, the particular 
choice $\beta(r,n)  \in\mathcal{KL}_0.$

\begin{lemma}\label{lemma:1}
Let $\beta$ be given by equation \eqref{eq:exp_controllability}. Consider a  dynamics with $M$ agents given by the 
dynamics of equation \eqref{eq:dynamic_general} with a control sequence $(u_n)_n$ and $u_n\in\mathcal{U}$
and initial conditions $X_0 \in \X^{M}.$ Assume $ \ell_M: \X^{M} \times \mathcal{U} \to \R$ and $\ell^{*}_M:\X^{M} \to \RR$ 
fulfill the assumptions
of Proposition \ref{prop1} for all $M$.
Further, we assume that \cite[Assumption 3.1]{Grune2009aa} holds, that is for all $M$ we have
\begin{equation}\label{ineq:discrete1}
\ell_M(X_n,u_n)\le\beta(\ell_M^*(X_0),n).
\end{equation}
Then, the mean-field limit $(\ell_M)_M$ and $(\ell^{*}_M)_M$ exist and the limit $\ell:\P(\X)\times \mathcal{U} \to \RR$
and $\ell^{*}:\P(\X)\to \RR,$  fulfills Assumption \ref{ass:beta}:
\begin{equation}\label{ineq:MF}
\ell(f_n,u_n)\le\beta(\ell^*(f_0),n).
\end{equation}
\end{lemma}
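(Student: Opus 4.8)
The plan is to derive the mean-field inequality \eqref{ineq:MF} directly as a limit of the discrete inequality \eqref{ineq:discrete1}, exactly mirroring the argument used in the proof of Proposition \ref{prop1}. First I would invoke Theorem \ref{Theorem2.1Card} to guarantee that the mean-field limits $\ell:\P(\X)\times\mathcal{U}\to\RR$ and $\ell^*:\P(\X)\to\RR$ exist: this requires checking that the hypotheses of that theorem are met, namely that $(\ell_M)_M$ is symmetric in the agent variables, uniformly bounded, and Lipschitz continuous with respect to $\done$ — all of which are assumed in the statement of the lemma (via "fulfill the assumptions of Proposition \ref{prop1}"). For $\ell^*_M(X)=\min_{u\in\mathcal{U}}\ell_M(X,u)$, I would note that the minimum over the compact set $U$ preserves uniform boundedness, symmetry, and Lipschitz continuity (an infimum of uniformly Lipschitz functions is Lipschitz with the same constant), so $\ell^*$ is well-defined and Theorem \ref{Theorem2.1Card} applies to it as well, giving $\ell^*$ as the limit of $(\ell^*_M)_M$ in the appropriate sense.

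Next I would fix an arbitrary $f_0\in\P(\X)$ and, using that $\X$ is a compact subset of $\R$, realize $f_0$ as the $\done$-limit of empirical measures $m^M_{X_0}$ for a suitable sequence of configurations $X_0=X_0^{(M)}\in\X^M$; the finite $1$-Wasserstein bound $\int_\X|x|\,dm^M_{X_0}<C$ uniformly in $M$ ensures this limit exists, just as in the proof of Proposition \ref{prop1}. I would then apply the discrete hypothesis \eqref{ineq:discrete1} along this sequence: for each $n$,
\begin{equation*}
\ell_M(X_n,u_n)\le\beta(\ell^*_M(X_0),n)=C\sigma^n\,\ell^*_M(X_0),
\end{equation*}
where $X_n$ is the state at step $n$ obtained by propagating $X_0$ under \eqref{eq:dynamic_general} with the control $(u_n)_n$. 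By the convergence of the discrete dynamics to \eqref{eq:mf_control} (again Theorem \ref{Theorem2.1Card} / Appendix \ref{sec:mean-field}), $m^M_{X_n}\to f_n$ in $\done$, and by continuity of $\ell$ and $\ell^*$ with respect to $\done$ we may pass to the limit $M\to\infty$ on both sides, using that the linear function $r\mapsto C\sigma^n r$ is continuous, to obtain $\ell(f_n,u_n)\le C\sigma^n\ell^*(f_0)=\beta(\ell^*(f_0),n)$. Since the controls $(u_n)_n$ were, by \cite[Assumption 3.1]{Grune2009aa}, chosen depending only on $X_0$ and transferred to the limit they depend only on $f_0$, this verifies Assumption \ref{ass:beta} for the exponential $\beta$ of \eqref{eq:exp_controllability}.

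The main obstacle I anticipate is the interchange of limits needed to pass from $\ell^*_M(X_0^{(M)})$ to $\ell^*(f_0)$: one must ensure that the modulus-of-continuity/convergence machinery of Theorem \ref{Theorem2.1Card} genuinely delivers $\ell^*_M(X_0^{(M)})\to\ell^*(f_0)$ along the chosen sequence, and not merely uniform-on-compacta convergence of the functions themselves. This is handled by the same "glued" construction $\mathbf{V}_M(f)=\inf_{X\in\X^M}\{V_M(X)+\omega(\done(m^M_X,f))\}$ used in Proposition \ref{prop1}, applied here to $\ell_M$ and $\ell^*_M$: the glued functions agree with the originals on empirical measures and inherit the common modulus of continuity $\omega$, so the estimate is stable under the $\done$-limit. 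A secondary point requiring a remark is that the control sequence $(u_n)_n$ lives in $\mathcal{U}$ and is independent of $M$ (it depends only on $f_0$), so no subsequence extraction on the controls is needed; only the states must be passed to the limit. Once these continuity points are pinned down, the chain of inequalities closes and the lemma follows.
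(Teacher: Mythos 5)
Your proposal is correct and follows essentially the same route as the paper: invoke Theorem \ref{Theorem2.1Card} to obtain the mean-field limits of $(\ell_M)_M$ and $(\ell^*_M)_M$, exploit the Lipschitz continuity of $r\mapsto C\sigma^n r$ to identify the limit of $\beta(\ell^*_M(\cdot),n)$ with $\beta(\ell^*(\cdot),n)$, and then pass the discrete inequality to the limit along empirical measures exactly as in Proposition \ref{prop1}. The only cosmetic difference is that the paper packages the right-hand side as an explicit auxiliary family $\beta_M(X,n):=\beta(\ell^*_M(X),n)$ and verifies the hypotheses of Theorem \ref{Theorem2.1Card} for it, whereas you argue the same continuity directly; your additional remark that $\ell^*_M=\min_{u\in U}\ell_M(\cdot,u)$ inherits symmetry, boundedness and the modulus of continuity is a useful detail the paper leaves implicit.
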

\begin{proof}
Due to the assumptions on the family $(\ell_M)_M$ given in Proposition \ref{prop1} we have the existence of the mean-field limit $\ell$
according to Theorem \ref{Theorem2.1Card}. Consider the family of functions
$$\beta_M (X,n) := \beta(\ell^{*}_M(X),n).$$
Clearly, the function $\beta_M$ is symmetric in $X\in \X^{M}.$ Using the definition of $\beta$ by equation 
\eqref{eq:exp_controllability} and the properties of $\ell^{*}_M$ 
we have that  $\beta_M(X,n)$ is uniformly bounded with respect to $X$ on the compact subset $\X^{M}$
by $ C \sigma^{n} \| \ell^{*}_M(X) \|.$ For each $r_1,r_2$ such that $|r_1-r_2|<\delta$ we have 
\[
|\beta(r_1,n)-\beta(r_2,n)|\le C\sigma^n |r_1-r_2|.
\]
Hence, for 
 $\epsilon=C\sigma^{n} \delta$, we have uniform continuity of $\beta_M$ 
due to the uniform continuity of $\ell^{*}_M.$ If $\omega(\cdot)$ is the modulus of continuity
of $\ell^*_M$ then $ C \sigma^{\tilde{n}} |\omega(\cdot)|$ is the modulus of continuity of $\beta_M.$ 
Hence, for each fixed $n$ there exists the mean-field limit ${\bf \beta}$ of $(\beta_M)_M.$ Also, there exists
the mean-field limit $\ell^*$ of $(\ell^*_M).$ Due to the Lipschitz continuity of $\beta$ 
we also have that $\sup_X | \beta(\ell^*_{M_k}(X)) - \beta(\ell^*( m^{M_k}_X ) | \to 0$ for  $(M_k)_k \to \infty.$ 
Therefore, the mean-field limi ${\bf \beta}(f,n) = \beta(\ell^*(f),n).$ Similarly to what we have proven in Proposition \ref{prop1} it follows that the inequality \eqref{ineq:discrete1} implies then \eqref{ineq:MF}.
\end{proof}

\begin{example}\label{ex2}
Consider the example of Section \ref{sec:example_motivation}. The running cost has been given 
by \begin{equation}\label{eq:cost_gen}
\ell(f_n,u_n)=\dfrac{1}{2}\left(\int_{\X}x f_n(x)dx\right)^2+\dfrac{\nu}{2}u_n^2.
\end{equation}
The optimal running cost $\ell^*$ can be computed explicitly and is given by 
\begin{equation}\label{eq:optimal_cost}
\ell^*(f_n)=\dfrac{1}{2}\left(\int_{\X}x f_n(x)dx\right)^2.
\end{equation}
From the mean-field dynamics for $f_n$ are given by \eqref{eq:mf 2a}. Upon integration on $\X$ 
we obtain 
\begin{equation}
 \int_{\X} x f_{n+1}(x) dx = \int_{\X} x f_n(x) dx + \Delta t \; u_n .
 \end{equation}
In \cite{AlbiHertyPareschi2014ab} the following feedback law $\mu: \P(\X) \to \mathcal{U}$ 
 has been proposed as instantaneous MPC: 
 \begin{equation}\label{eqmhh}
  \mu(f_n) = \frac{1}{ 1+\nu } \int_{\X} x f_n(x) dx.
  \end{equation}
Using $\Delta t u_n:= \mu(f_n)$  the 
optimal running cost $\ell^{*}(f_n)$ is expressed in terms of the initial cost $\ell^{*}(f_0)$ 
as 
\begin{equation}
\ell^*(f_n)=\dfrac{1}{2}\left(1-\dfrac{1}{1+\nu}\right)^2 \left(\int_{\Omega}xf_{n-1}dx\right)^2=\left(1-\dfrac{1}{1+\nu}\right)^{2n} \ell^*(f_0).
\end{equation}
Therefore we have
\begin{equation}\label{ttemp}
\ell(f_n,u_n)=\left(1+\dfrac{\nu}{(1+\nu)^2}\right)\left(1-\dfrac{1}{1+\nu}\right)^{2n} \ell^*(f_0)=C\sigma^n \ell^*(f_0).
\end{equation}
The overshoot constant $C$ and the decay rate $\sigma$ is computed explicitly for a given regularization $\nu>0$ as 
\begin{equation}
C=1+\dfrac{\nu}{(1+\nu)^2}\ge 1,\qquad \sigma=\left(1-\dfrac{1}{1+\nu}\right)^2\in (0,1).
\end{equation}
Consider the receeding horizon costs with length one as  $\tilde{V}:\P(\X)\to \R$ defined as
\begin{equation}
\tilde{V}(f_0):=\sum_{n=0}^{1} \ell(f_n,\mu(f_n) ). 
\end{equation}
Due to equation \eqref{ttemp} we obtain the assertion of Proposition \ref{prop1} is true by  simple computation
\begin{equation}
 \tilde{V}(f_0) \geq \tilde{V}(f_1) + \alpha \ell(f_0,\mu(f_0))
 \end{equation}
provided that $\alpha:=1 - ( C \sigma )^2$ fulfills $0 < \alpha.$ 
This yields a  bound on the regularization parameter $\nu.$ 
This estimate for $\alpha$  is {\em only} valid in the  case of the feedback law \eqref{eqmhh}.
The idea is to generalize the result to arbitrary symmetric running costs $\ell$ 
and {\em different} control horizons. In the numerical results we then observe for large values of $\nu$ 
also a decay in the receeding horizon costs provided the control horizon is sufficiently large. 
\end{example}

The following Lemma is the analog to \cite[Theorem 4.2]{Grune2009aa}.  
The main idea is to establish the inequality \eqref{eq:ineq_1} using Lemma \ref{lemma:1}
for  a function 
$\tilde{V}$ given by the approximate value function \eqref{optimal :receding horizon cost}. 
The discrete approximate optimal cost $J^{u}_{N,M}:\X^{M} \times \mathcal{U}^{N} \to \R$  with running
cost $\ell_M:\X^{M}\times \mathcal{U} \to \R$  and corresponding approximate value function $V_{N,M}:\X^{M}\to \R$ 
are obtained by considering the discrete measure $m_X^{M}$ for $X \in \X^{M}$ and fixed $M:$
\begin{equation}\begin{split}\label{discrete opt}
{\bf V}_{N,M}(X_0) := V_N( m_{X_0}^{M} ), \;{\bf J}^{u}_{N,M}(X_0, (u_n)_{n=0}^{N-1} ) := \sum\limits_{n=0}^{N-1} \ell_M(X_n,u_n).
\end{split}\end{equation} 
where 
\begin{equation}
\ell_M(X) = \ell( m_{X_n}^{M}, u_n).
\end{equation}
Here, $X_n = ( x_{i,n} )_{i=0}^{M}$ fulfills the discrete dynamics \eqref{pf:dynamics} with initial data $x_i(0)=x_{i,0}.$
 We {\em assume } that  the discrete functions fulfill the corresponding relation \eqref{optimal :receding horizon cost}  for all $X \in \X^{M}:$
$${\bf V}_{N,M}(X) = \min\limits_{ (u_n)  \in \mathcal{U}^{N} } {\bf  J}^{u}_{N,M}(X, (u_n)_{n=0}^{N-1} ).$$
The symmetric feedback law $\mu$ is the MPC feedback introduced on the discrete level by equation 
\eqref{MPCsec1} and equation \eqref{eq:mf 1b}, respectively.

\begin{lemma}\label{th:1}
Consider the discrete dynamics \eqref{eq:dynamic_general} with $M$ agents and $\beta$ given by equation \eqref{eq:exp_controllability} with $C\geq 1$
and $\sigma \in (0,1).$ Consider
a model predictive control horizon of $N.$ Assume the family $(\ell_M)_M, \ell_M:\X^{M}\times \mathcal{U} \to \R$ fulfill the assertions of Proposition \ref{prop1}.  Assume assumption \ref{ass:beta} holds true. Let 
${\bf V}_{N,M}, \ell_M$ and ${\bf J}^{u}_{N,M}$ be given by equation \eqref{discrete opt}. Given are   sequences $\lambda_n>0$,   $n=0,\dots,N-1$ and $\nu>0$ such that 
\begin{eqnarray}
\sum\limits_{n=k}^{N-1}\lambda_n \leq C \lambda_k \frac{1-\sigma^{N-k}}{1-\sigma},  \;k=0,\dots,N-2, \label{4.2} \\
\nu \leq \sum\limits_{n=0}^{j-1} \lambda_{n+1} + C \lambda_{j+1} \frac{1-\sigma^{N-j}}{1-\sigma}, \; j=0,\dots,N. \label{4.1}
\end{eqnarray}
holds true. Assume that then also 
\begin{equation} \label{4.3} \sum_{n=0}^{N-1}\lambda_n-\nu\ge \lambda_0\alpha, \end{equation}
holds true for some $\alpha \in (0,1].$
Then, for any $M$ and any $X_0 \in \X^{M}$ and any running cost $\ell_M$ fulfilling \eqref{ineq:discrete1}
we obtain \eqref{eq:ineq_1} for the MPC feedback law $\mu_M$ given by \eqref{pf:feed}
and for the value function 
$$\tilde{V}_M:={\bf V}_{N,M}.$$ 
Provided that $(\mu_M)_M$ is symmetric and fulfills the assertions of Theorem \ref{Theorem2.1Card}, we obtain for each $f \in \P(\X)$ as limit of $(m_{X}^{M})_M, M\to\infty,$ the inequality
\begin{equation}\label{final}
\alpha V_{\infty}(f)\le \alpha J_{\infty}^{u}(f)\le V_N(f)
\end{equation}
where $u=(u_n)_n, u_n= \mu(f_n)$ and where $\mu$ is the mean-field limit of $(\mu_M)_M.$  
\end{lemma}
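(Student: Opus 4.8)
The plan is to mimic, at the finite-dimensional level, the proof of \cite[Theorem 4.2]{Grune2009aa} to establish the one-step relaxed dynamic programming inequality \eqref{eq:ineq_1}, and then to pass to the mean-field limit exactly as in Proposition \ref{prop1} and Lemma \ref{lemma:1}. First I would work entirely with $M<\infty$ fixed. By Assumption \ref{ass:beta} (in its discrete form \eqref{ineq:discrete1}, with $\beta$ given by \eqref{eq:exp_controllability}) there is, for every initial state $X_0\in\X^M$, an admissible control whose running cost decays geometrically; abbreviating $\ell_n:=\ell_M(X_n,u_n)$ along the MPC-optimal trajectory and $\ell^*_0:=\ell^*_M(X_0)$, the controllability bound gives $\ell_n\le C\sigma^n\ell^*_0$. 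Introducing the auxiliary weights $\lambda_n\ge 0$ as the per-step costs along an optimal open-loop trajectory for horizon $N$ (so that $V_{N,M}(X_0)=\sum_{n=0}^{N-1}\lambda_n$ and the tail costs satisfy the geometric estimates encoded in \eqref{4.2}), and $\nu:=\sum_{n=1}^{N-1}\lambda_n^{(1)}$ the cost along the trajectory restarted after one MPC step so that $V_{N,M}(X_1)\le\nu$ (this is \eqref{4.1}), the dynamic programming principle for $V_{N,M}$ together with these estimates yields
\begin{equation}
V_{N,M}(X_0)-V_{N,M}(X_1)\ge \sum_{n=0}^{N-1}\lambda_n-\nu\ge \lambda_0\alpha=\alpha\,\ell_M(X_0,\mu_M(X_0)),
\end{equation}
where the last inequality is assumption \eqref{4.3} and $\lambda_0=\ell_M(X_0,\mu_M(X_0))$ since $\mu_M$ is the MPC feedback \eqref{pf:feed}. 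This is precisely \eqref{eq:ineq_1} with $\tilde V_M={\bf V}_{N,M}$.

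Next I would verify that $\tilde V_M={\bf V}_{N,M}$, $\ell_M$ and $\mu_M$ satisfy the hypotheses of Proposition \ref{prop1}: symmetry of ${\bf V}_{N,M}$ and $\mu_M$ follows from symmetry of $\ell_M$ and of the discrete dynamics \eqref{eq:dynamic_general}; uniform boundedness of $\ell_M$ (hence of the finite sum ${\bf V}_{N,M}\le N\|\ell\|_\infty$); and Lipschitz continuity with respect to $\done$, which is inherited from $\ell_M$ and the Lipschitz dependence of the flow map \eqref{eq:dynamic_general} on the initial data (the flow is a composition of $N$ Lipschitz maps, with a constant depending on $N$, $\Delta t$ and the Lipschitz constant of $g$, all uniform in $M$ by the assumptions of \cite[Section 4]{BlanchetCarlier2014aa}). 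This places us in the setting of Theorem \ref{Theorem2.1Card}, so the mean-field limits $\tilde V=V_N$, $\ell$, $\mu$ exist, and Lemma \ref{lemma:1} guarantees the limiting controllability bound \eqref{ineq:MF} holds.

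Finally I would invoke Proposition \ref{prop1}: having checked its inequality \eqref{eq:ineq_1} for $\tilde V_M={\bf V}_{N,M}$ and the symmetric MPC feedback $\mu_M$ for every $M$, the proposition delivers for each $f\in\P(\X)$ arising as the $\done$-limit of $(m^M_X)_M$ the chain $\alpha V_\infty(f)\le\alpha J^u_\infty(f)\le \tilde V(f)=V_N(f)$ with $u=(u_n)_n$, $u_n=\mu(f_n)$, which is \eqref{final}. The one subtlety is that \eqref{eq:ineq_1} must hold with the \emph{same} $\alpha$ for all $M$; this is automatic here because $\alpha$ is determined in \eqref{4.3} solely by the data $C$, $\sigma$, $N$ and the weight sequence $(\lambda_n)$, which by the controllability assumption \eqref{ineq:discrete1} can be chosen uniformly in $M$.

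The main obstacle I expect is the bookkeeping in the finite-dimensional step: correctly identifying the weights $\lambda_n$ and the quantity $\nu$ as costs along the appropriate (optimal, and once-restarted-optimal) open-loop trajectories, and chaining the dynamic programming identity with the controllability estimate \eqref{ineq:discrete1} to produce the telescoping inequalities \eqref{4.2}--\eqref{4.3}. This is exactly the content of the proof of \cite[Theorem 4.2]{Grune2009aa}; the only genuinely new work is checking that every constant (boundedness, Lipschitz modulus, $\alpha$) is uniform in $M$ so that Theorem \ref{Theorem2.1Card} applies and the limit can be taken, and reproducing the limiting argument of Proposition \ref{prop1} for the specific pair $({\bf V}_{N,M},\mu_M)$.
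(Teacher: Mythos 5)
Your proposal is correct and follows essentially the same route as the paper's own sketch: both identify $\lambda_n$ with the running costs along the horizon-$N$ optimal trajectory and $\nu$ with ${\bf V}_{N,M}$ evaluated at the once-advanced state, use the fact that the MPC feedback reproduces the first optimal step so that $X_1^{\mu}=X_1^{*}$, chain this with \eqref{4.3} to obtain \eqref{eq:ineq_1} for $\tilde V_M={\bf V}_{N,M}$, and then conclude via Proposition \ref{prop1} in the mean-field limit. Your additional remarks on the uniformity in $M$ of the bounds and of $\alpha$ make explicit what the paper leaves implicit, but do not change the argument.
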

{\em Sketch of the proof}.
The proof is analogous to the proof of \cite[Theorem 4.2]{Grune2009aa}. We recall that condition \eqref{4.3} is equivalent to the assertion \cite[(4.3)]{Grune2009aa}. 
For $\beta$ given by equation \eqref{eq:exp_controllability} the assertions \cite[(4.1),(4.2)]{Grune2009aa} simplify to equation \eqref{4.1} and \eqref{4.2}, respectively. 
Consider $M$ agents with corresponding arbitrary initial condition $X_0 \in \X^{M}.$ Consider the finite horizon problem of length $N$ given by
\begin{equation}\label{pf:ex} (u^{*}_n)_{n=0}^{N-1} = \mbox{ arg }\min\limits_{ (u_n)_n \in \mathcal{U}^{M} }  {\bf J}_{N,M}^{u}(X_0, (u_n)_{n=0}^{N-1}).\end{equation}
Then, we denote the corresponding optimal trajectory $X_n^{*}$ obtained through the dynamics \eqref{eq:dynamic_general} for $u_n=u_n^{*}.$ 
We define $$\lambda_{n,M} = \ell_M( X_n^{*}, u_n^{*} ), \; n=0,\dots,N-1$$
and $$\nu_{M} = {\bf V}_{N,M}(X_1^{*}).$$ 
Similarly to  \cite[Proposition 4.1]{Grune2009aa} the values $\lambda_{n,M}$ and $\nu_M$ 
defined in the proof above fulfill equation \eqref{4.1} and equation \eqref{4.2}. This result
has been established in the case of finite number of agents in a sequence of auxiliary aftermaths
that are not repeated here. 
Now, consider the MPC feedback law $\mu_M(X) = v_0$ where  
\begin{equation}\label{pf:feed} (v_0)_{k=0,\dots,N-1} = \mbox{ arg} \min\limits_{ (v_k), v_k \in \mathcal{U} } \sum\limits_{n=0}^{N-1} \ell_M( Y_n, v_k ) \end{equation}
where $Y_n \in \X^{M}$ solves equation \eqref{eq:dynamic_general} with initial data $Y_{0}=X$  and let $(X_n^{\mu})_n$ be the trajectory obtained through \eqref{eq:dynamic_general} for initial data $X_0$ and for $u_n = \mu(X_n)$. 
We observe that $u_0^{*} = \mu(X_0)$ and $X^{\mu}_i=X^{*}_i$ for $i=0$ and $i=1.$ Therefore, 
$\ell_M( X_0, u^*_0) = \ell_M(X_0,\mu(X_0)).$ Therefore, we obtain for all $M$  and any $\alpha$  from equation \eqref{4.3}
\begin{eqnarray*}
 {\bf V}_{N,M}(X^{\mu}_1) + \alpha \ell_M(X_0,\mu(X_0)) =   {\bf V}_{N,M}(X^{*}_1) + \alpha \ell_M(X_0, u_0^{*}) \\
= \nu_M + \alpha \lambda_{0,M} \leq   \sum\limits_{n=0}^{N-1} \lambda_{n,M} =  \sum\limits_{n=0}^{N-1} \ell_M(X_n^*,u_n^*) = {\bf V}_{N,M}(X_0).
\end{eqnarray*}
Therefore, ${\bf V}_{N,M}$ fulfills the assertion on $\tilde{V}_M$ of Proposition \ref{prop1}. 
The second assertion follows as a consequence of Proposition \ref{prop1}. This finishes the outline of the proof. \\

The assumption on existence of an optimal control \eqref{pf:ex}
for ${\bf J}_{N,M}$ is also precisely as in the case of finitely many agents. Note that as in the finite dimensional case the
 optimal control might not exist.  The previous result \eqref{final} gives a {\em performance bound } in the following sense: 
due to the definition of the approximate value function $V_N(f)$ and $V_\infty(f)$ we have 
$$ V_N(f) \leq V_\infty(f).$$
Therefore, we obtain the (usable) estimate on the suboptimality of the MPC $\mu$ as 
\begin{equation}\label{reallyfinal}
 J_{\infty}^{u}(f) \le \dfrac{1}{\alpha} V_{\infty}(f).
\end{equation}
This precisely tells the dependence of the  MPC cost on the optimal expected cost $V_{\infty}$
provided that $\alpha$ is known. The value of $\alpha$ is  the effective degree of $\mu$  with respect to the (unknown) 
infinite horizon control.  Clearly, the computation of $\alpha$ fulfilling inequality \eqref{4.3} is in general 
a difficult task requiring estimates on the value function and running costs.  However, for $\beta$ given by equation \eqref{eq:exp_controllability}
we may estimate  $\alpha$ solely based on the inequalities \eqref{4.2} and \eqref{4.1}. This estimate is denoted by $\alpha_N.$ 
The corresponding result is independent of the meanfield limit and has been established in \cite[Theorem 5.4]{GrunePannekSeehafer2010aa}.
\begin{lemma}\label{th:22}
Let $\beta$ be given by equation \eqref{eq:exp_controllability} for some $C\geq 1$ and $\sigma \in (0,1).$ Let $N$ be the  prediction horizon $N$. Given 
is a sequence $\lambda_n$ and $\nu>0$ such that equation \eqref{4.2} and \eqref{4.1} holds true.  Assume that
\begin{equation}\label{alphaN}
\alpha_N=1-\dfrac{(\gamma_N-1)\displaystyle\prod_{i=2}^N (\gamma_i-1)}{\displaystyle\prod_{i=2}^N\gamma_i-\displaystyle\prod_{i=2}^N(\gamma_i-1)}>0
\end{equation}
holds with $\gamma_i=C\displaystyle\sum_{n=0}^{i-1}\sigma^n$. Then, for $\alpha=\alpha_N$ the inequality  \eqref{4.3} is fulfilled.
\end{lemma}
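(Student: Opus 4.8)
The statement is purely finite‑dimensional and scalar: it concerns real numbers $\lambda_0,\dots,\lambda_{N-1}>0$ and $\nu>0$ linked by \eqref{4.2} and \eqref{4.1}, with no reference to the mean‑field limit, and it coincides with \cite[Theorem~5.4]{GrunePannekSeehafer2010aa} specialised to the exponentially controllable comparison function $\beta(r,n)=C\sigma^{n}r$ of \eqref{eq:exp_controllability}; the plan is to follow that argument. Inserting this $\beta$ and writing $\gamma_i=C\sum_{n=0}^{i-1}\sigma^{n}=C\tfrac{1-\sigma^{i}}{1-\sigma}$, inequality \eqref{4.2} becomes $\sum_{n=k}^{N-1}\lambda_n\le\gamma_{N-k}\lambda_k$ and \eqref{4.1} becomes $\nu\le\sum_{n=0}^{j-1}\lambda_{n+1}+\gamma_{N-j}\lambda_{j+1}$, so that ``\eqref{4.3} holds for $\alpha=\alpha_N$'' is equivalent to proving that, over all $\lambda_n>0$, $\nu>0$ satisfying these two families of inequalities,
\[
\inf\ \frac{1}{\lambda_0}\Bigl(\sum_{n=0}^{N-1}\lambda_n-\nu\Bigr)\ \ge\ \alpha_N ,
\]
with equality in fact attained. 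I would therefore reduce the lemma to the computation of the value of this linear program.

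To carry this out I would first use that all the constraints are positively homogeneous in $(\lambda,\nu)$ and normalise $\lambda_0=1$. Since the objective decreases in $\nu$, a minimiser saturates \eqref{4.1}, i.e.\ $\nu=\min_{0\le j\le N}\bigl[\sum_{n=0}^{j-1}\lambda_{n+1}+\gamma_{N-j}\lambda_{j+1}\bigr]$, which reduces the problem to a minimisation over $\lambda$ alone. The key step is a vertex (exchange) argument showing that at the minimiser \emph{all} the inequalities \eqref{4.2} are active; introducing the tail sums $S_k=\sum_{n=k}^{N-1}\lambda_n$, activeness yields the recursion $S_k=\tfrac{\gamma_{N-k}}{\gamma_{N-k}-1}S_{k+1}$ and hence, using $S_0=\gamma_N\lambda_0=\gamma_N$, closed forms for $\sum_{n=0}^{N-1}\lambda_n$ and for each $\lambda_k$ in terms of the finitely many numbers $\gamma_2,\dots,\gamma_N$. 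Substituting these into the right‑hand side of \eqref{4.1} would then pin down the binding index $j$ and the extremal value of $\nu$, and an elementary manipulation of the two products $\prod_{i=2}^{N}\gamma_i$ and $\prod_{i=2}^{N}(\gamma_i-1)$ would identify $\sum_{n=0}^{N-1}\lambda_n-\nu$ with exactly the expression $\alpha_N$ of \eqref{alphaN}. As this bound holds for every admissible $(\lambda,\nu)$, inequality \eqref{4.3} follows with $\alpha=\alpha_N$.

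The main obstacle is the vertex argument itself --- rigorously justifying that the worst case occurs with all relations \eqref{4.2} active in a chain and with a single relation \eqref{4.1} binding; in \cite{GrunePannekSeehafer2010aa} this is handled by an explicit monotonicity argument together with an induction on $N$, and since Lemma~\ref{th:1} only uses the present statement as a black box to supply the constant $\alpha$, I would cite that proof rather than reproduce it, stressing that none of its steps involve the limit $M\to\infty$. For completeness I would also record that $\alpha_N\in(0,1]$: since $C\ge1$ and $\sigma\in(0,1)$ one has $\gamma_i>1$ for all $i\ge2$ (indeed $\gamma_2=C(1+\sigma)>1$ and $\gamma_i$ is increasing), so numerator and denominator of the subtracted fraction in \eqref{alphaN} are positive and $\alpha_N<1$, while $\alpha_N>0$ is the standing hypothesis; hence $\alpha=\alpha_N$ is an admissible choice in \eqref{4.3} and the conclusion of Lemma~\ref{th:1} applies with this value.
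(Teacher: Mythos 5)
Your proposal is correct and takes essentially the same route as the paper, which gives no proof of Lemma \ref{th:22} at all and simply defers to \cite[Theorem 5.4]{GrunePannekSeehafer2010aa}, noting that the statement is independent of the mean-field limit. Your sketch of the underlying linear-programming/vertex argument is a faithful outline of that cited proof, so citing it as you propose is exactly what the paper does.
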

Equation \eqref{alphaN} is therefore called  performance bound and may be computed a priori to estimate the distance 
of the optimal cost towards the MPC controlled problem. It solely depends on $C$ and $\sigma$ being the estimates on a the running cost $\ell.$ 
As already noted in \cite{Grune2009aa} this estimate might give not necessarily optimal performance bounds.

\section{Numerical Results}\label{num}

First, we investigate the performance bound \eqref{alphaN}. In the example \ref{ex2} 
we have the following explicit values for $C$ and $\sigma$: 
$$C=1+\frac{\nu}{ (1+\nu)^2 },\qquad \sigma= \left(1-\frac{ 1}{ 1+\nu}\right)^2.$$
Estimations on the coefficient $\alpha_N$ allow to measure the quality of the MPC generated control sequence. We depict the value of $\alpha_N$ as a function of $N$ and $\nu$ in Figure \ref{fig:alphaN_nu}. The performance bound can only be used if $\alpha_N>0$ and we indicate the line $\alpha_N=0$ by a black line. We observe that the performance bound increases with respect to the MPC horizon as expected. The best bound is $\alpha_N=1/2$.  For large values of the regularization parameter $\nu$ 
we have to consider a sufficiently large MPC horizon $N$ in order to use the theoretical results. Moreover,
 we observe that the result of Lemma \ref{th:22} is  consistent with the estimate derived in the special case of example \ref{ex2} in the case $N=2.$
 The numerical results below indicate that the bound is too pessimistic, similarly to what has been already observed in the finite dimensional case.

\begin{figure}[htb]
\centering
\includegraphics[scale=0.5]{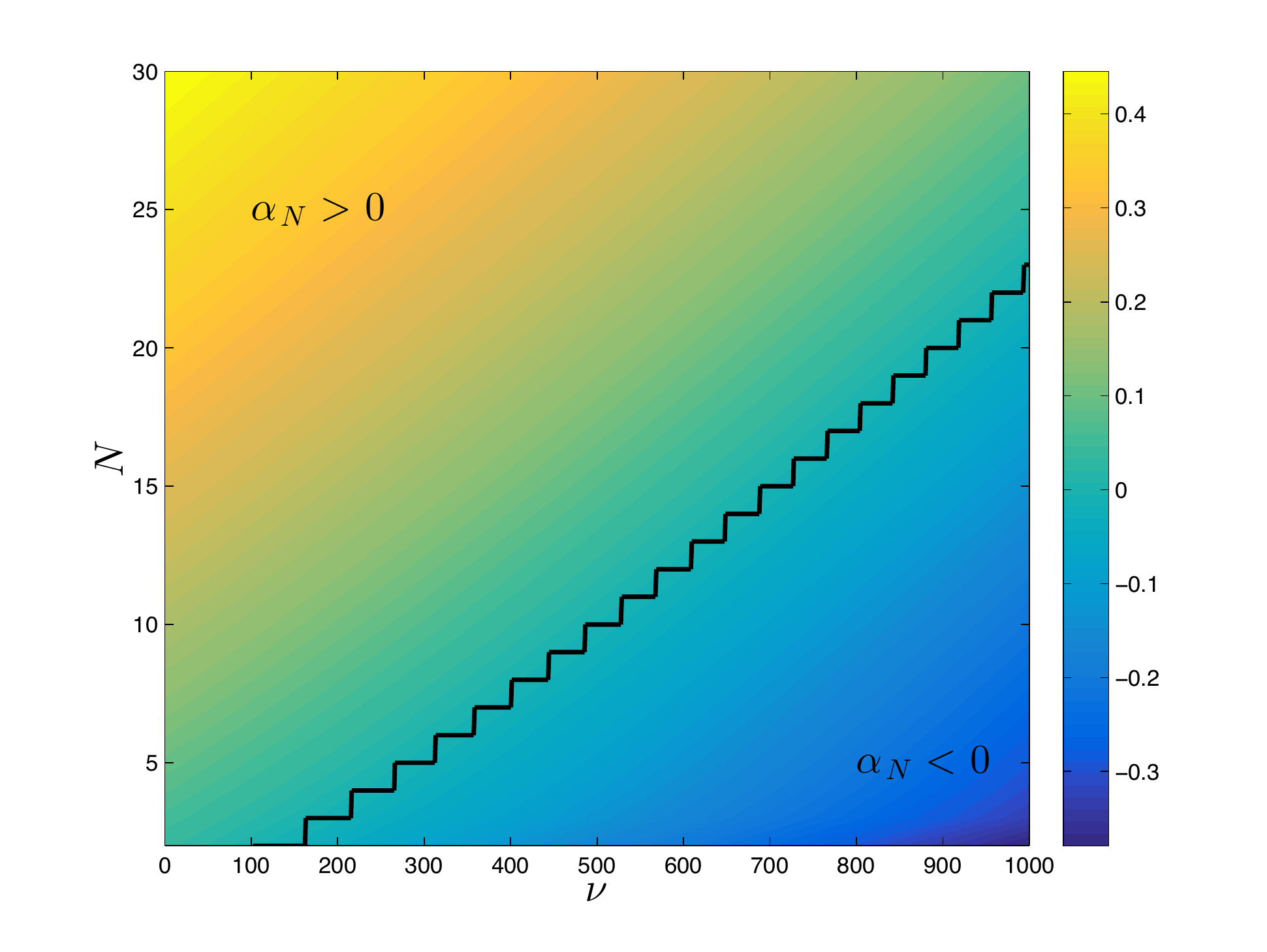}
\caption{Computation of $\alpha_N$ for different values of the regularization parameter $\nu$. We observe for increasing values of $\nu$ 
 corresponding longer control horizons in order to recover positivity of the value $\alpha_N$. }
 \label{fig:alphaN_nu}
\end{figure}

As a numerical example we propose the following discretization coherently with \ref{sec:mean-field}. This discretization reduces the $N$ step MPC problem to again a discrete problem of $M$ agents. We approximate the initial distribution $f_0 \in \mathcal{P}(\X)$ by $f_{M,0}$ given by  a sum of Dirac delta 
\begin{equation}\label{eq:idnuma}
f_{M,0}= \dfrac{1}{M}\sum_{i=1}^M\delta(x-x_{i,0}).
\end{equation}
located at points $x_{i,0} \in \X.$ For the Example \ref{ex2} we observe that if $f_0=f_{M,0},$ then 
$f_n$ is also composed of a sum of Dirac delta. We assume in the following that $f_0$ as well
as $f_n$ decays to zero for $x \to \partial\X$. We observe that if $\int_{\X} f_0 dx=1$ then 
we have $\int_{\X} f_n dx=1$. An approach based on Dirac delta 
converges toward a continuous distribution function in the limit $M\rightarrow +\infty$, 
 provided we have a considerably amount of particles centered in $x_{i,n}\in\X$. 
 Within the described discretization we also recover the setting  of \cite{Grune2009aa,GrunePannekSeehafer2010aa} as numerical scheme. \\
 
Thanks to the structure of example \ref{ex2} further simplifications can be obtained. We recall the  mean-field running cost  
 $\ell(f_n.u_n)=\dfrac{1}{2}\left(\int_{\X}xf_n(x)dx\right)^2+\dfrac{\nu}{2}u_n^2.$ We consider the mean-field equation equivalent to the discretized dynamics of  \eqref{eq:mpc00} for $P=1$ and $\Delta t=1$
\begin{equation}\label{eq:dynamics_num}
f_{n+1}(x)=f_n(x)- \partial_x\int_{\X}(y-x)f_n(y)f_n(x)dy- \partial_x \left( u_nf_n(x) \right),
\end{equation}
a detailed derivation is given in Appendix \ref{sec:mean-field}. Upon multiplication by a general $x\in\X$ and integrating with respect $dx$ we obtain
\begin{equation}\label{eq:mean_evo}
\int_{\X}xf_{n+1}(x)dx=\int_{\X}xf_{n}(x)dx + u_n.
\end{equation}
If we introduce a new variable for the mean $Y_n := \int_{\X}xf^{n}(x)dx$ the problem simplifies to the equation for the evolution of $Y_n$. Further, the cost function is also expressed in terms
of $Y_n$ as 
$
\ell(Y_n,u_n) = \frac{1}2 Y_n^{2} + \frac{\nu}2 u_n^2, 
$
and equation \eqref{eq:mean_evo}
$Y_{n+1}=Y_n + u_n.
$

Using the reformulation of the control of the mean the problem therefore reduces
to a problem appearing in the existing theory \cite{Grune2009aa}. In particular,
the MPC subproblem to determine the optimal control for the horizon $N$ 
is solved explicitly for the previous dynamics. We computed for a horizon $N$
the MPC control at time $n$ and initial data $Y_0$ as  
$(u^{MPC}_N)_n(Y_0)= v_1,$ where 
$$ (v_j)_{j=1}^N := \textrm{arg}\min \sum\limits_{j=n}^{n+N} \ell(Y_j,u_j), \; Y_{j+1}=Y_j + u_j, Y_n=Y_0.$$
For a fixed time horizon $T=100$, fixed initial datum $Y_o$ and  fixed $N$ we then compute the value of the cost functional  for 
$$J_T^{u^{MPC}_N} = \sum\limits_{n=0}^{T} \ell(Y_j,(u^{MPC}_N)_n)$$
where $Y_{j+1}=Y_j+ (u^{MPC}_N)_n(Y_n)$. Further, we compute $J_{100}^{u^{MPC}_N}$ to obtain the optimal cost
 $V_{100}^{*}$. 
 
 According to Lemma \ref{th:22} we obtain the behavior of the MPC cost $J_T^{u^{MPC}_N}$
 in relation to the optimal cost   $V_{100}^{*}$ in Figure \ref{fig.01}. As expected for larger  MPC horizons we observe convergence towards
 the optimal cost. The performance bound $\alpha_N$ is negative for $N\leq 4$ and therefore the Theorem \ref{th:1} can not be applied. 
  In the results we choose $\nu=10^{2}$. We observe that the bound on $\alpha_N$ is quite pessimistic and the distance of the estimated mismatch of the MPC controlled case to the optimal one is quite large for small horizons, i.e., of order $10^{3}$ for the horizon $N=5$.
 
\begin{figure}[htb]
\begin{center}\label{fig:intro}
\includegraphics[scale=0.5]{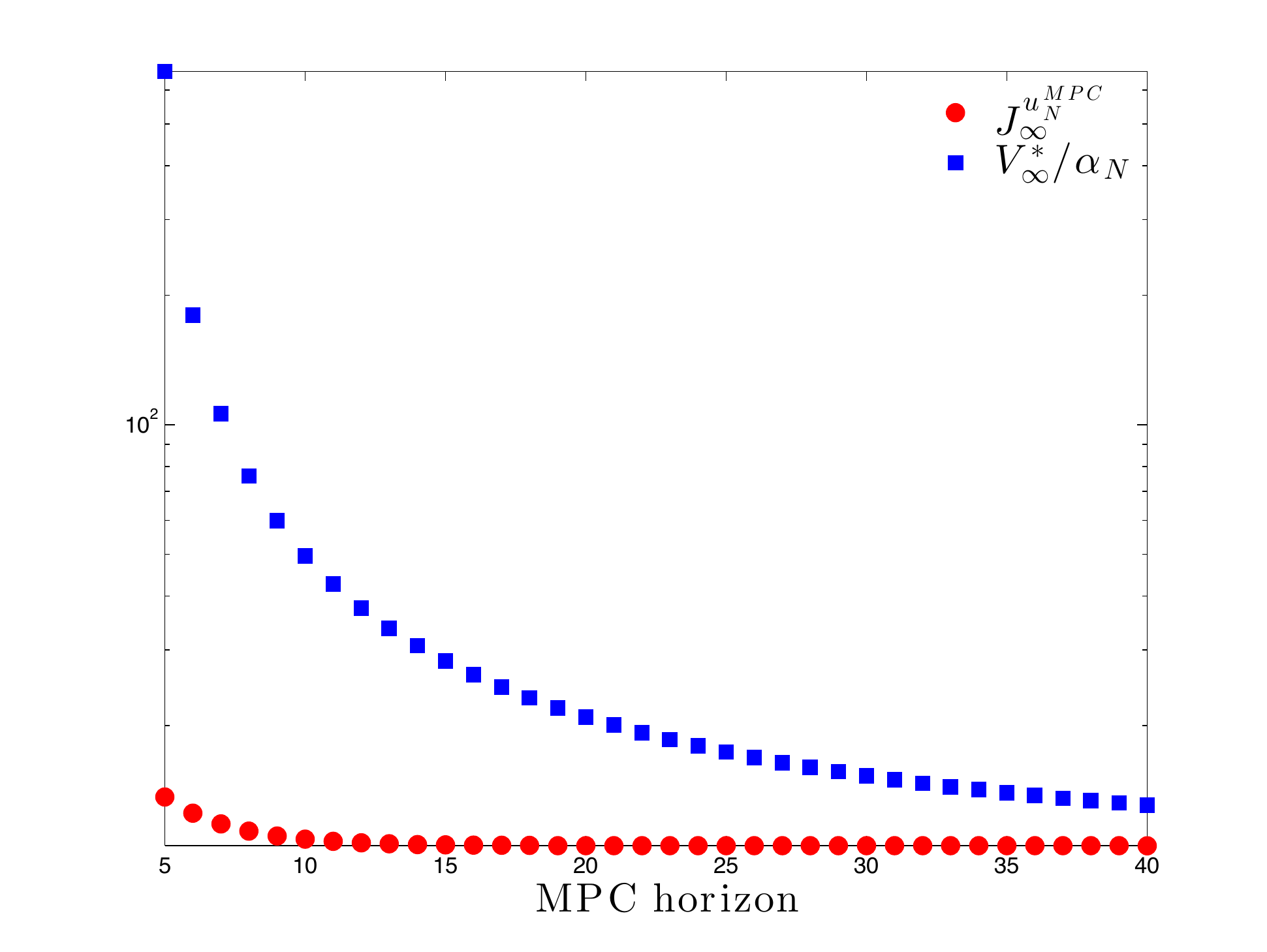}
\end{center}
\caption{Value of the cost functional $J_T^{u^{MPC}_N}(X_0)$ 
for controls obtained using a MPC strategy with control horizon $N$ (red) and presentation of the optimal costs  $V_T^{*}(X_0)$  multiplied by $\frac{1}{\alpha_N}$ where 
$\alpha_N$ is computed as in \cite[Theorem 5.4]{GrunePannekSeehafer2010aa}. For $N\leq 4$ no estimate of the type \eqref{eq:estimate} could be established.
}
\label{fig.01}
\end{figure}

We further investigate the behavior of the particle system \eqref{eq:dynamics_num} for controls with different MPC horizon. According
to the behavior of the  cost we expect that for increasing time horizon we are closer to the optimal cost. Defining $$E_n:=\int_{\R} x^{2} f(x) dx.$$
we obtain from equation \eqref{eq:dynamics_num}   
$$ E_{n+1} = - E_n + 2 Y_n^{2} + u_n Y_n.$$
The running cost tries to minimizes a trade--off of the mean of the distribution and the control action. If the mean $Y_n$ tends to zero,
then we observe that the energy $E_n$ tends to zero exponentially fast. Therefore, we expect with longer time horizon a mean $Y_n$ closer 
to zero and small variance of the solution to the kinetic equation. We simulate using $M=10^{5}$ discrete points randomly distributed on $\X=[-1,1]$ 
as initial condition $f_{M,0}$ as in equation \eqref{eq:idnuma}. The MPC control is computed according to the considerations above 
for $\nu=10^{2}$ and $\nu=10^3$ reported in Figure \ref{fig:NVar} and Figure \ref{fig:NVar2}. In both figures we show the computational results for the time 
evolution of the  distribution $f_n$ for $n=0,\dots, 100$. As expected longer optimization horizons leads to a faster decay in the variance of the distribution $f_n$. 

\begin{figure}
\centering
\subfigure[$N=2$]{\includegraphics[scale=0.2]{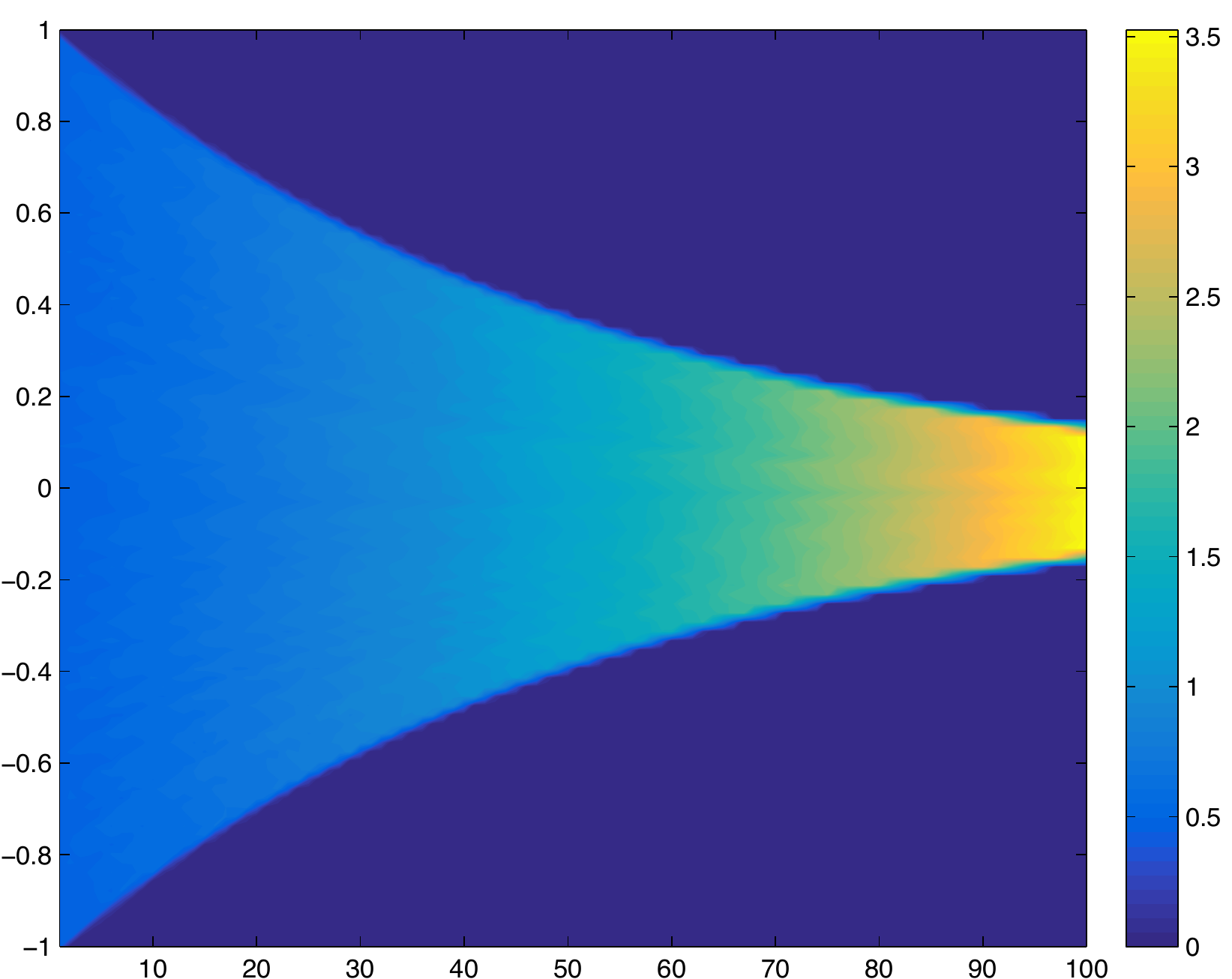}}
\subfigure[$N=3$]{\includegraphics[scale=0.2]{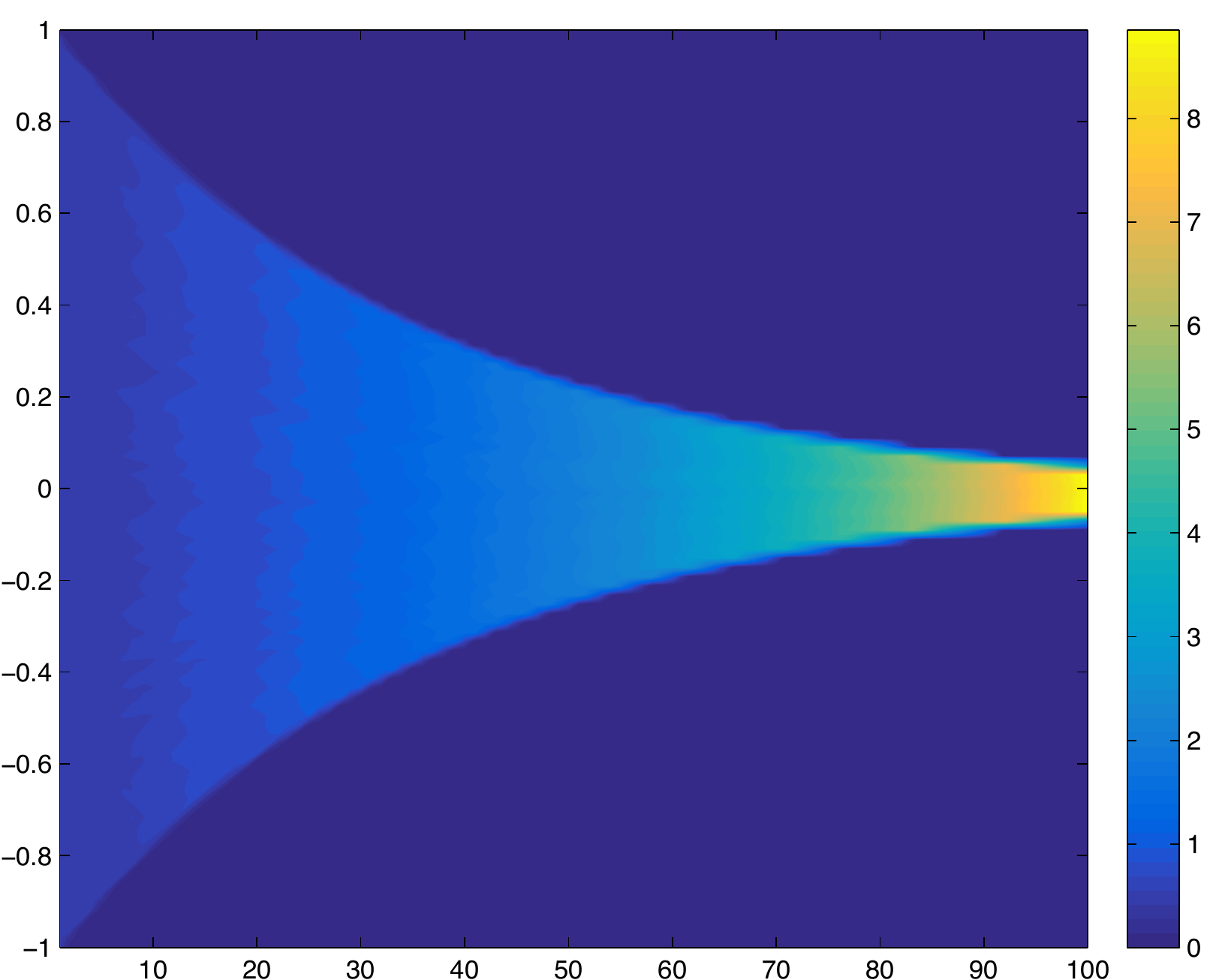}}
\subfigure[$N=4$]{\includegraphics[scale=0.2]{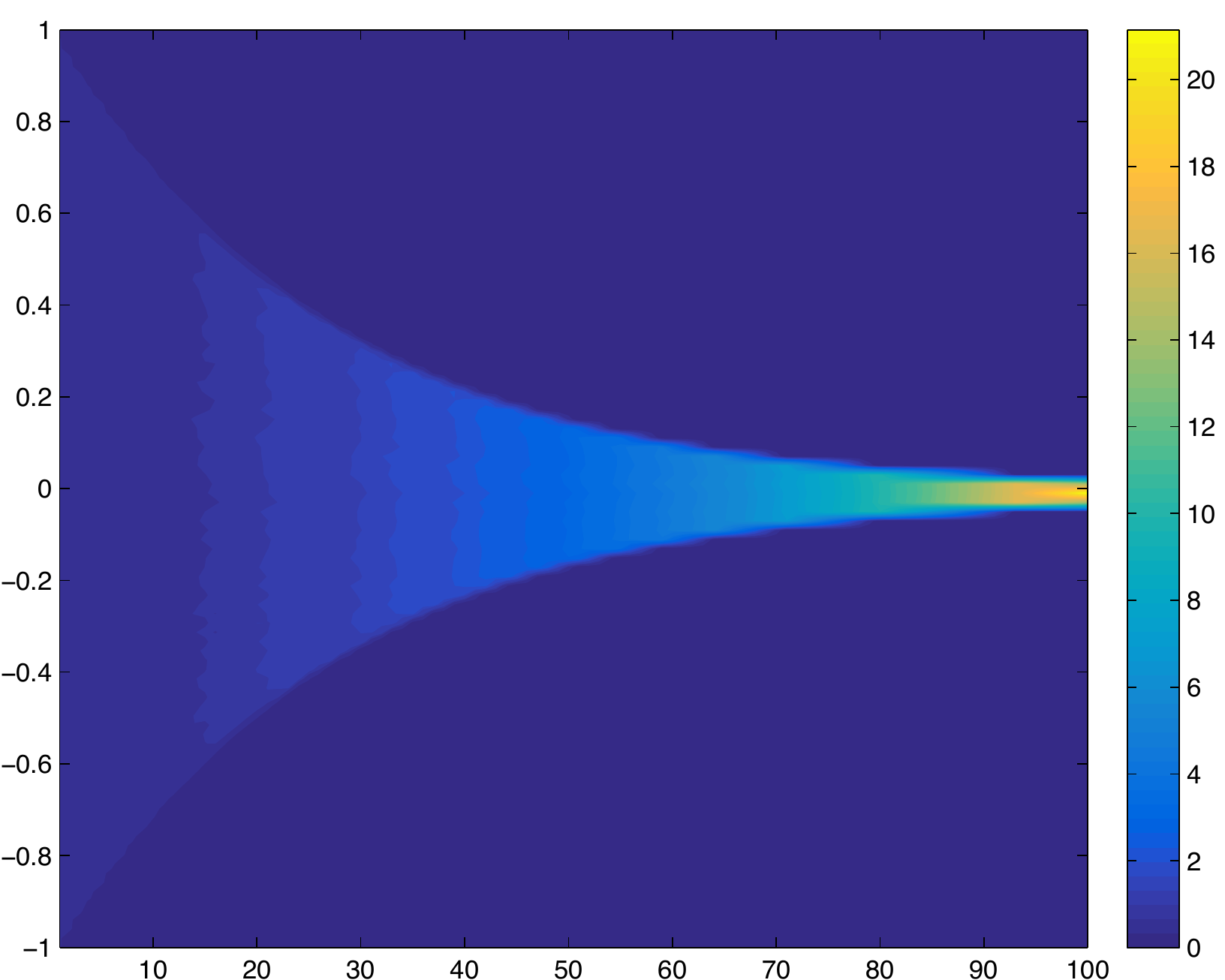}}\\
\subfigure[$N=5$]{\includegraphics[scale=0.2]{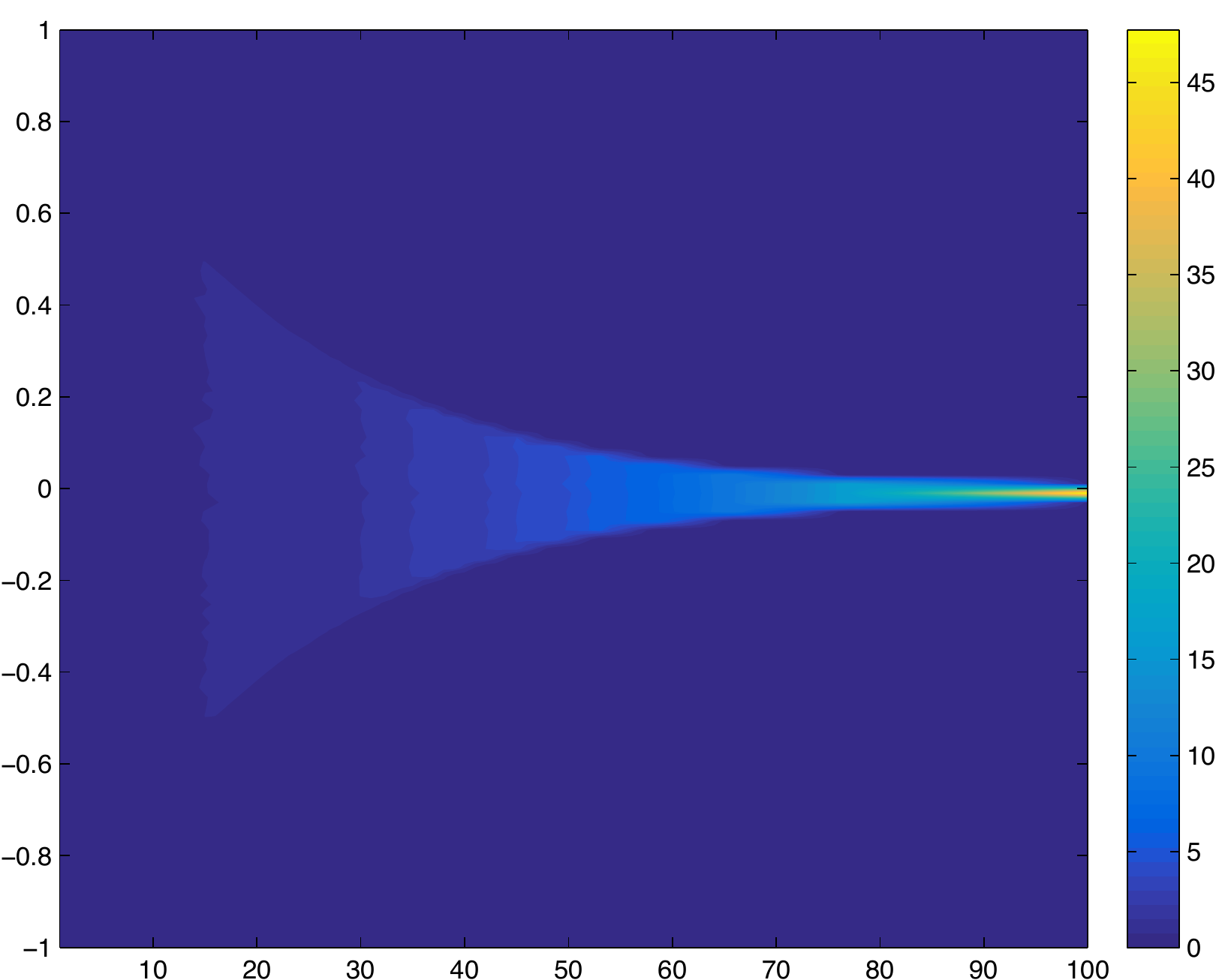}}
\subfigure[$N=6$]{\includegraphics[scale=0.2]{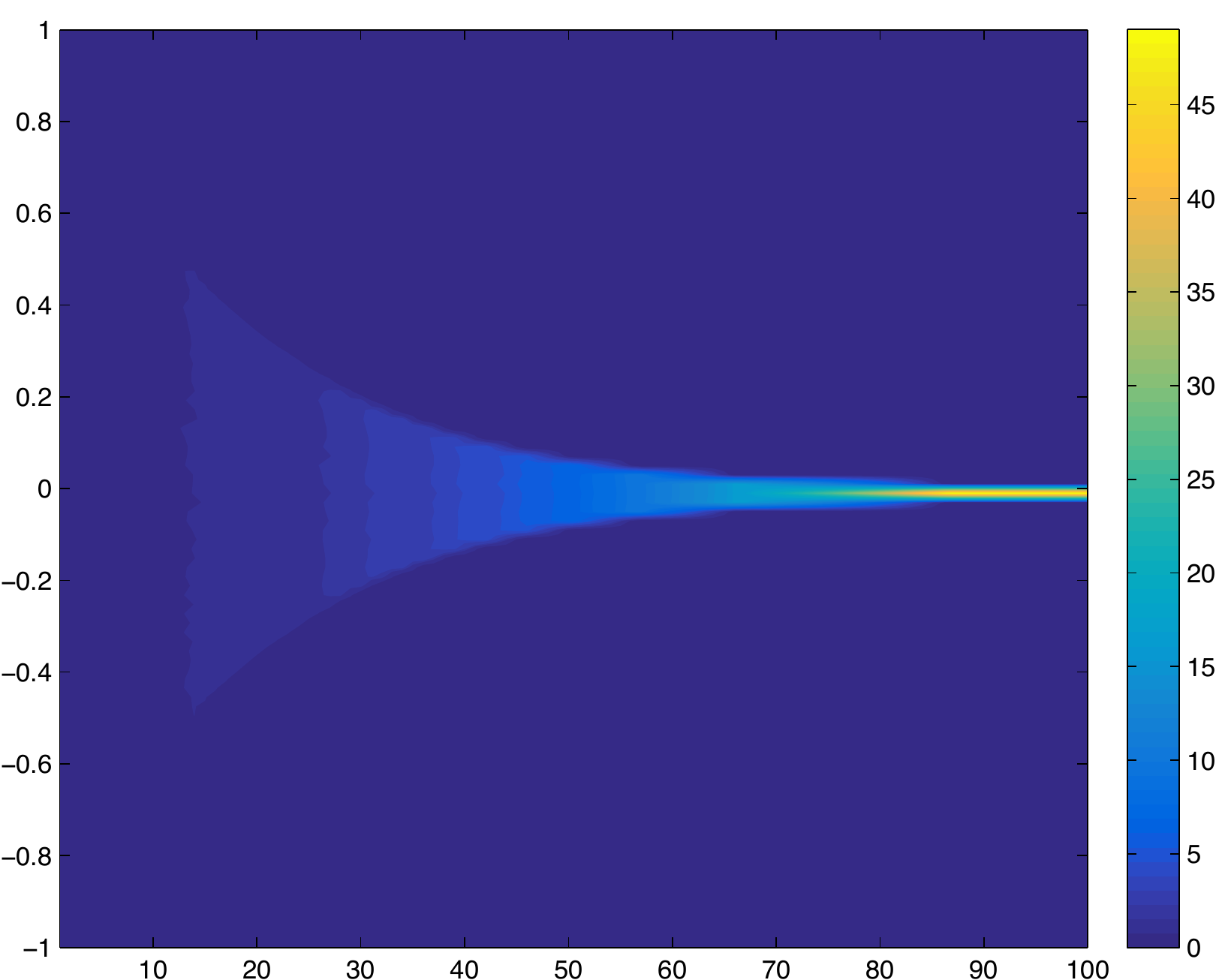}}
\subfigure[$N=7$]{\includegraphics[scale=0.2]{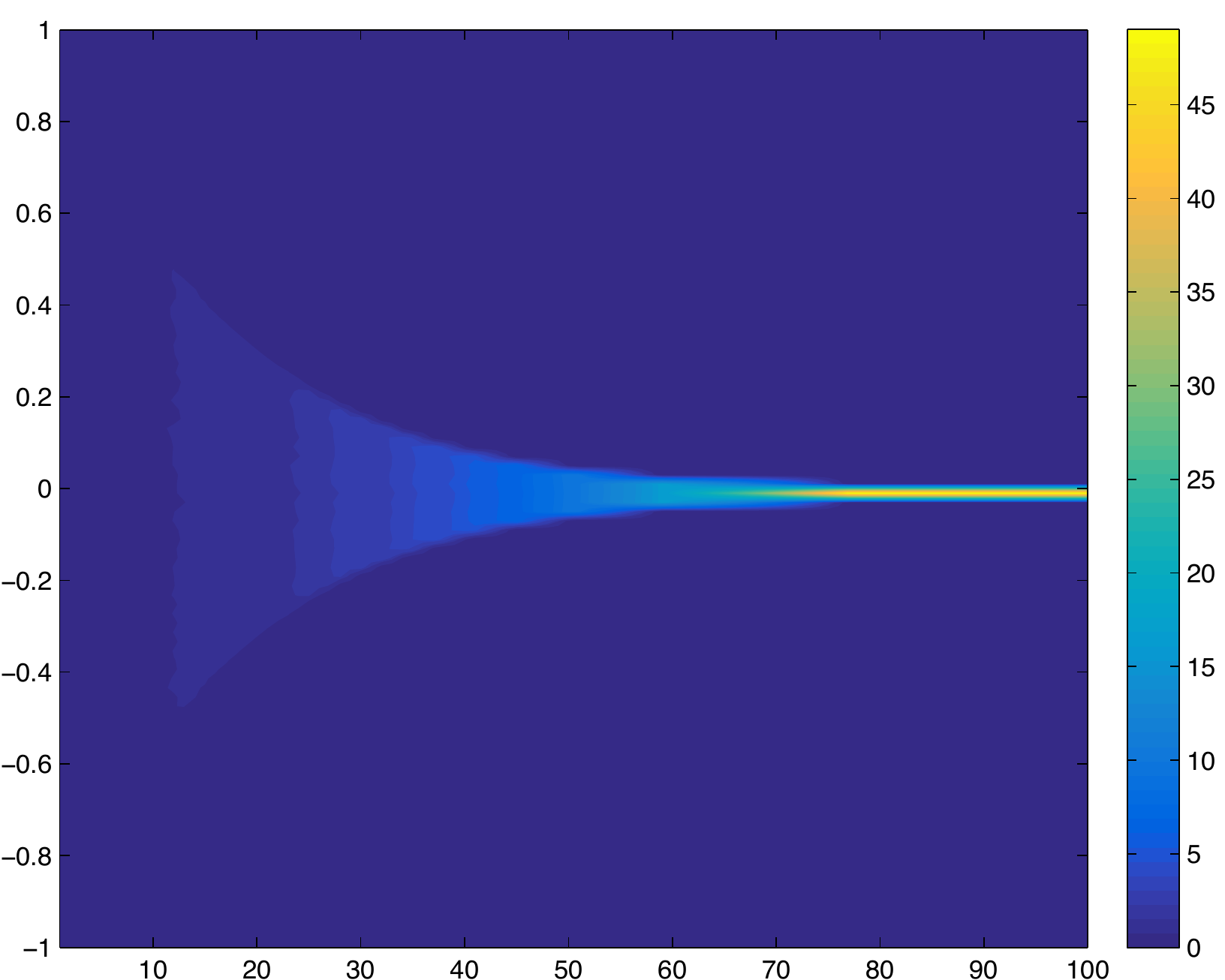}}\\
\subfigure[$N=8$]{\includegraphics[scale=0.2]{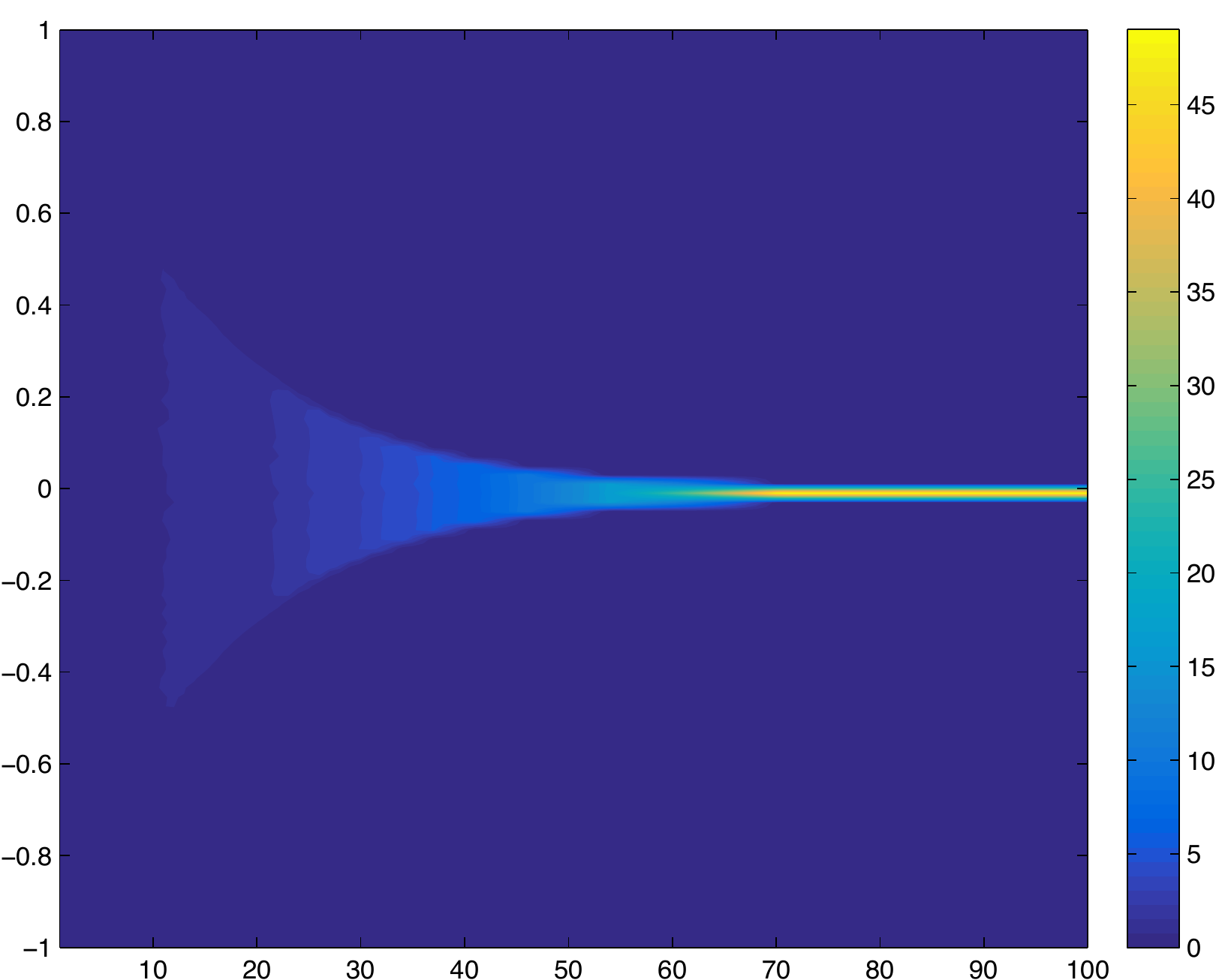}}
\subfigure[$N=9$]{\includegraphics[scale=0.2]{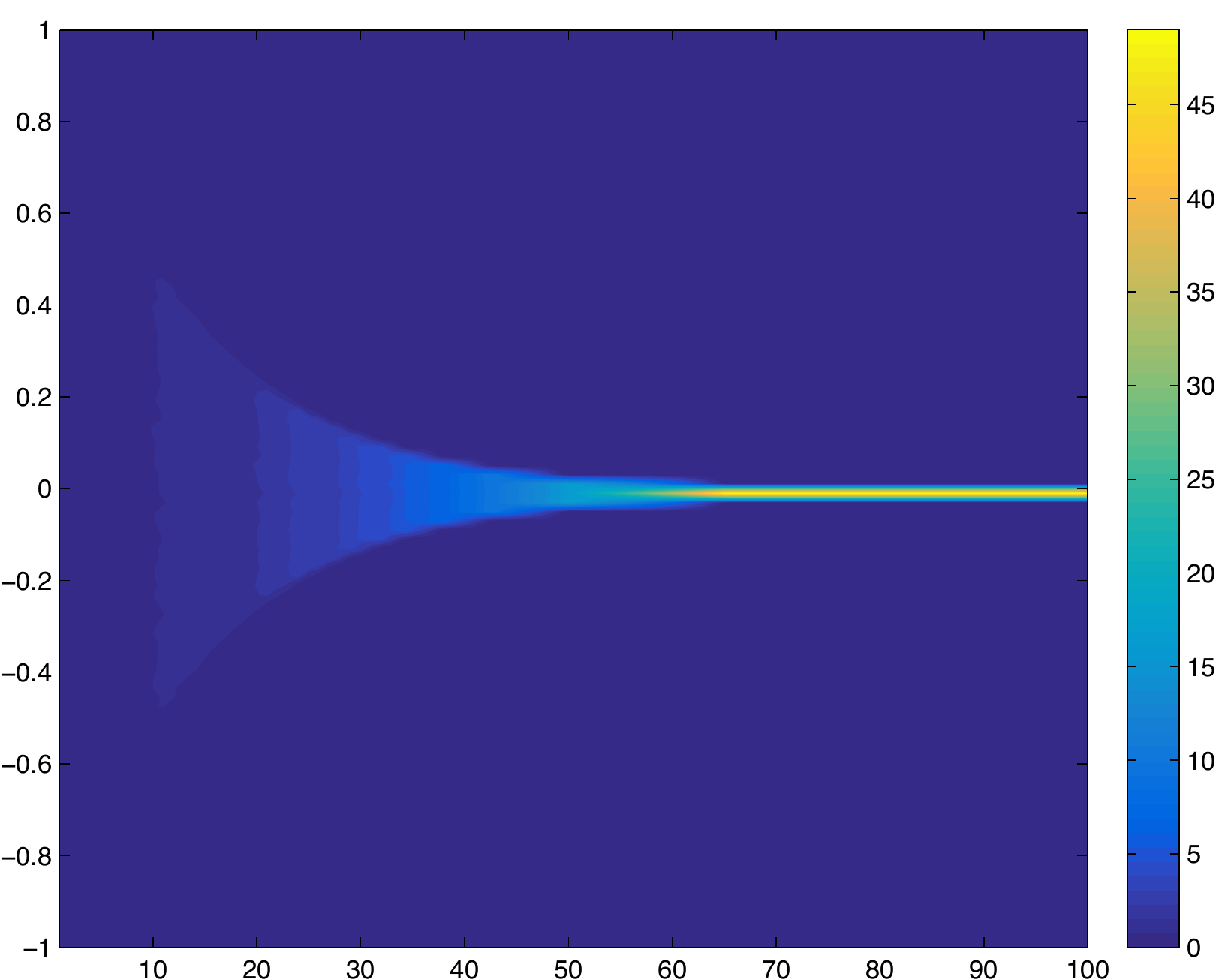}}
\subfigure[$N=10$]{\includegraphics[scale=0.2]{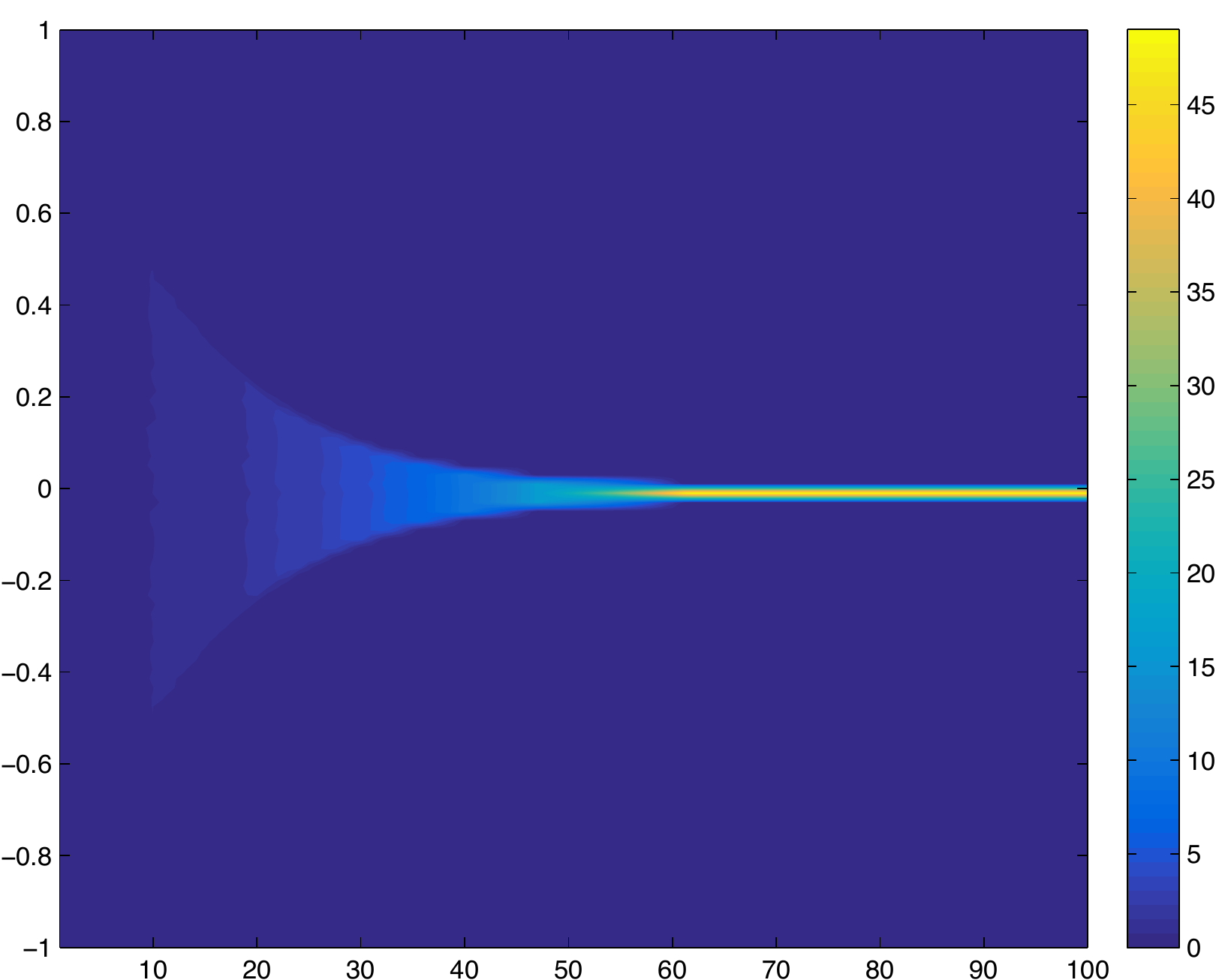}}
\caption{Experimental results for the optimization problem with varying optimization horizon $N$ and regularization constant $\nu=10^2$.} \label{fig:NVar}
\end{figure}

\begin{figure}
\centering
\subfigure[$N=2$]{\includegraphics[scale=0.2]{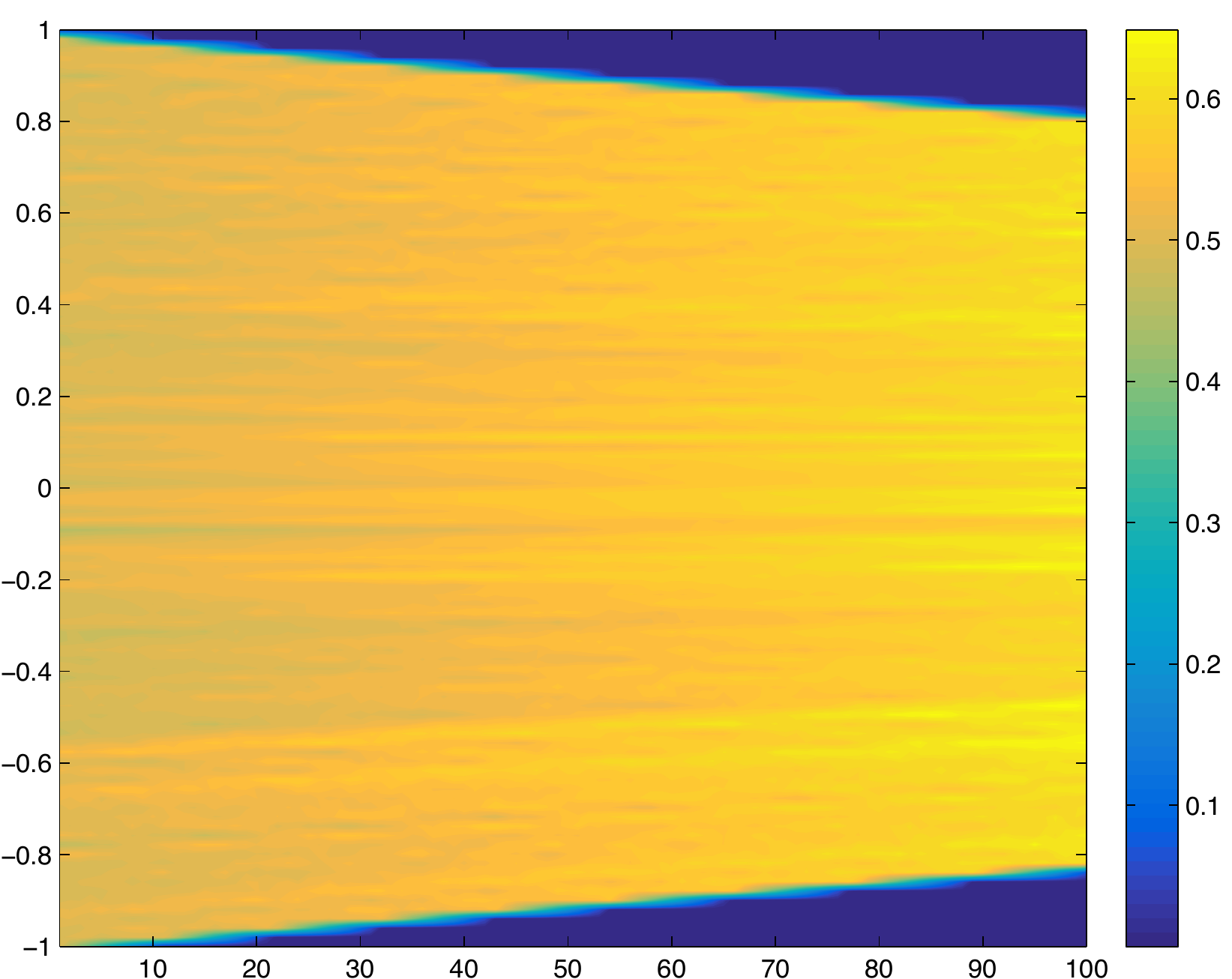}}
\subfigure[$N=3$]{\includegraphics[scale=0.2]{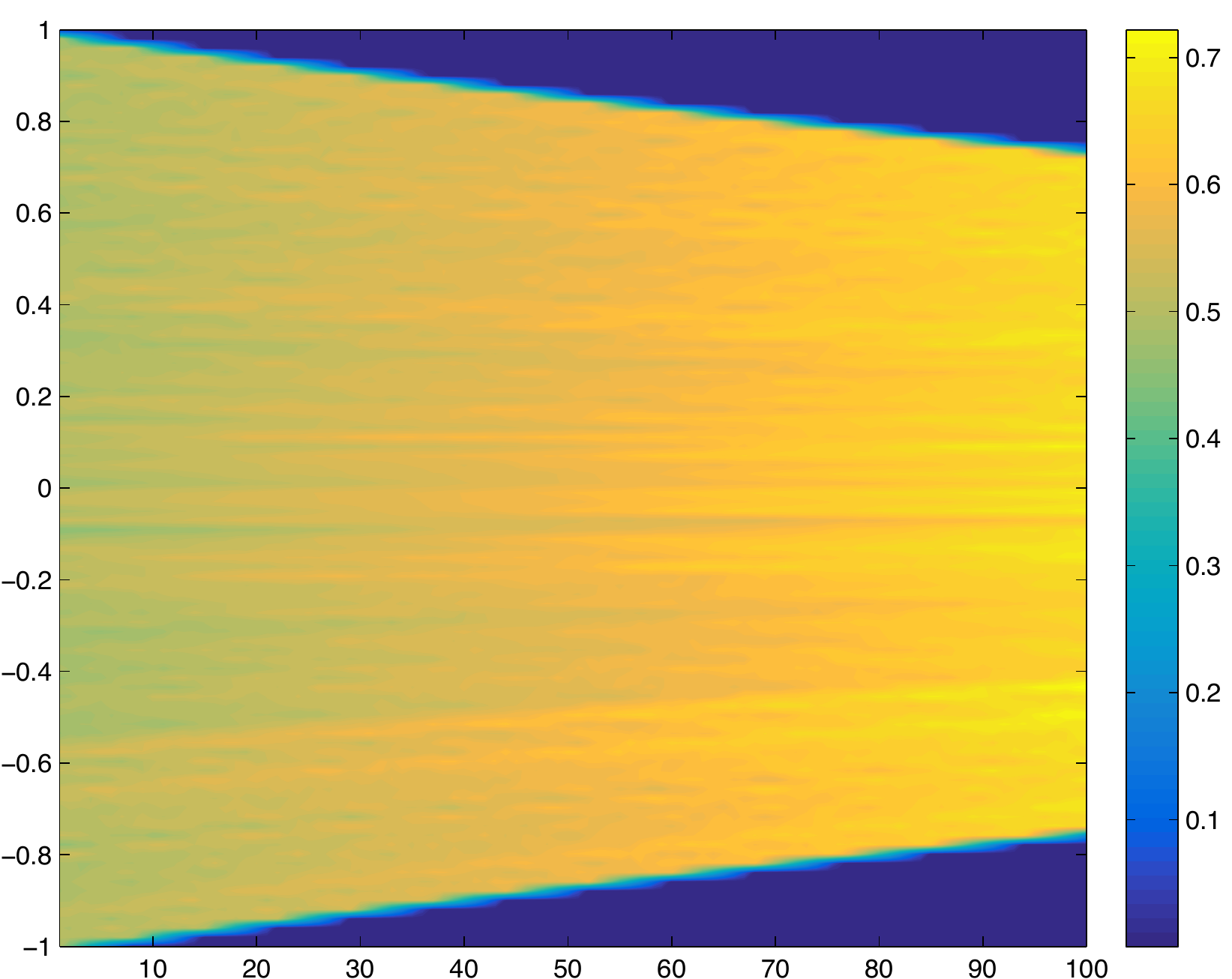}}
\subfigure[$N=4$]{\includegraphics[scale=0.2]{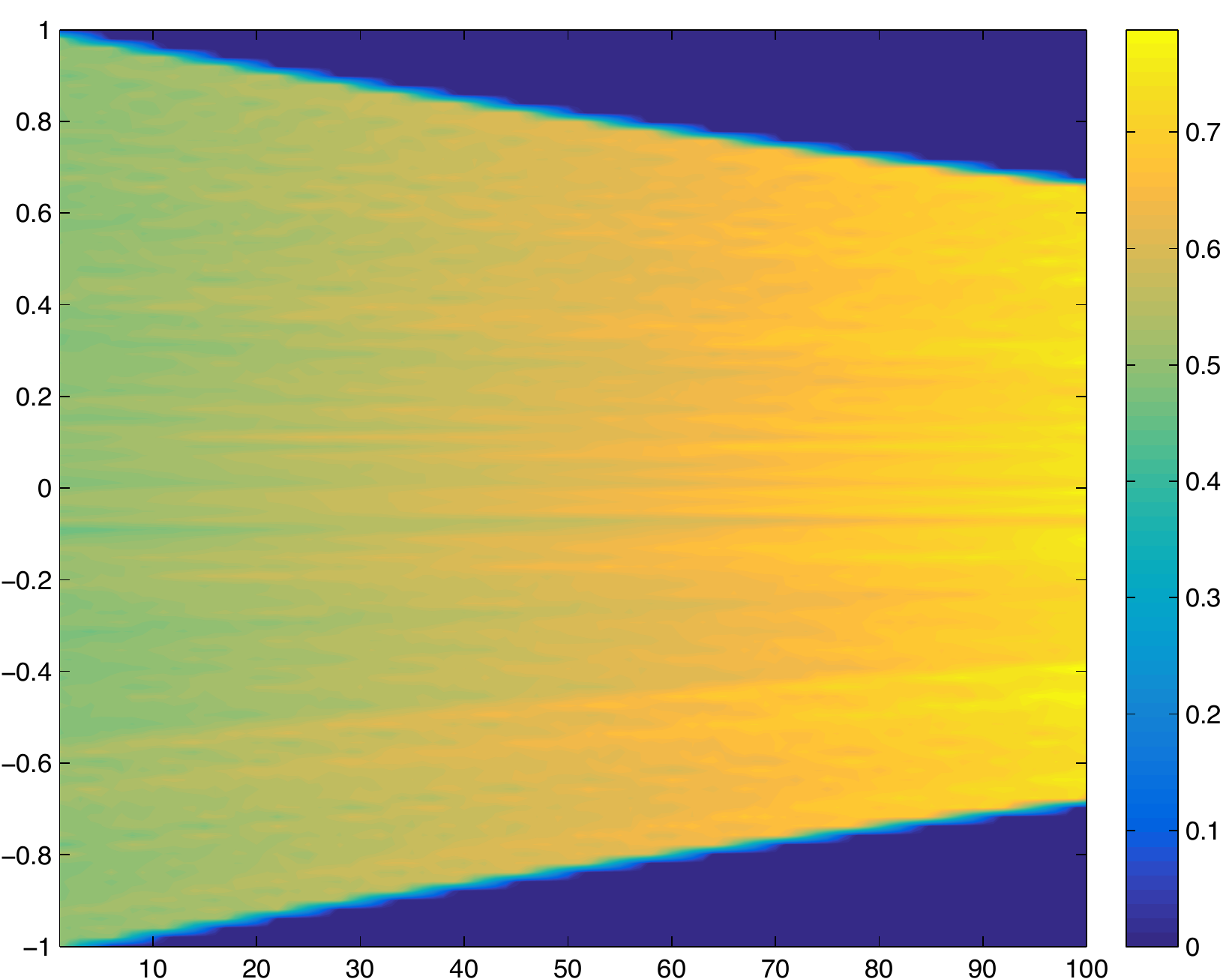}}\\
\subfigure[$N=5$]{\includegraphics[scale=0.2]{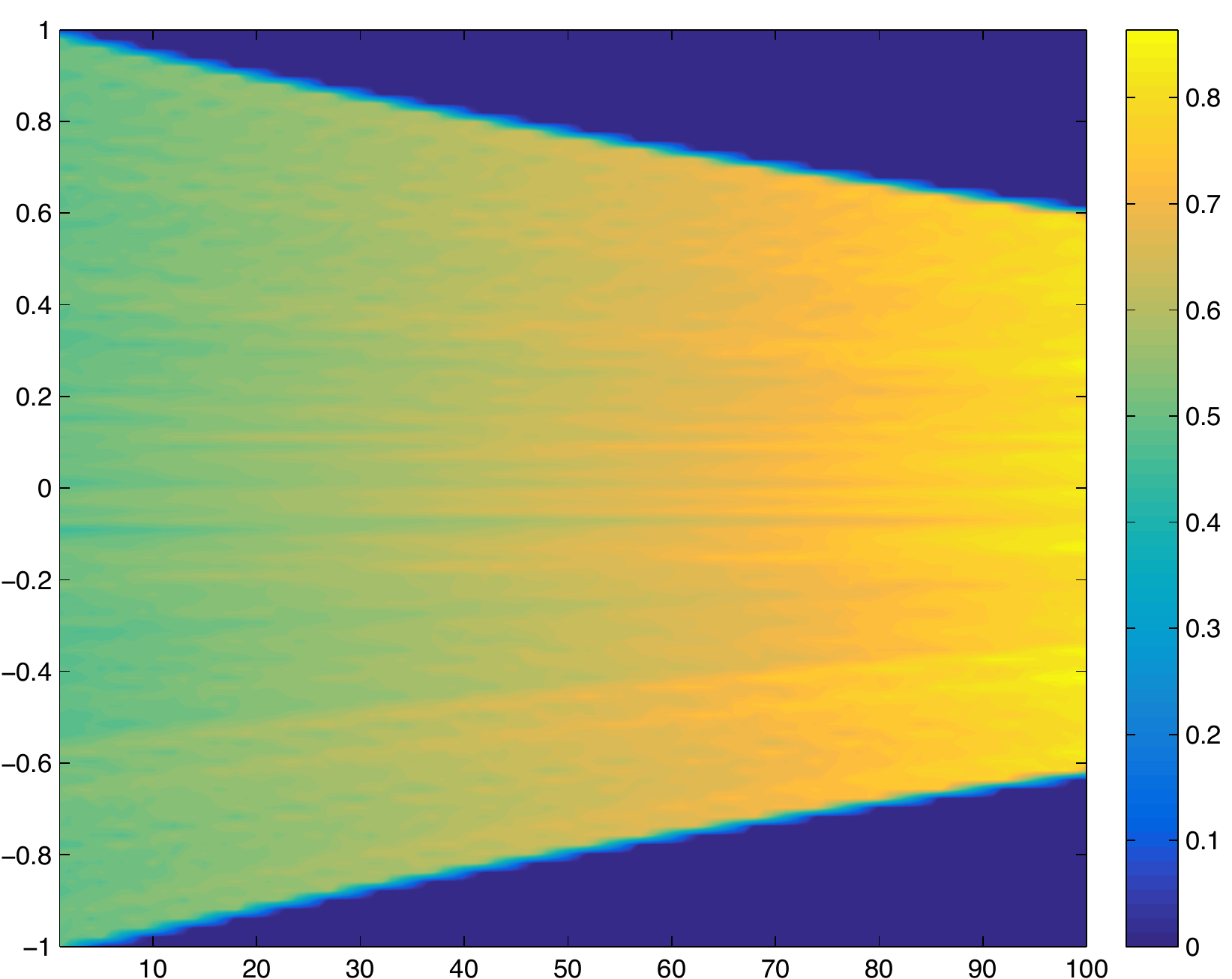}}
\subfigure[$N=6$]{\includegraphics[scale=0.2]{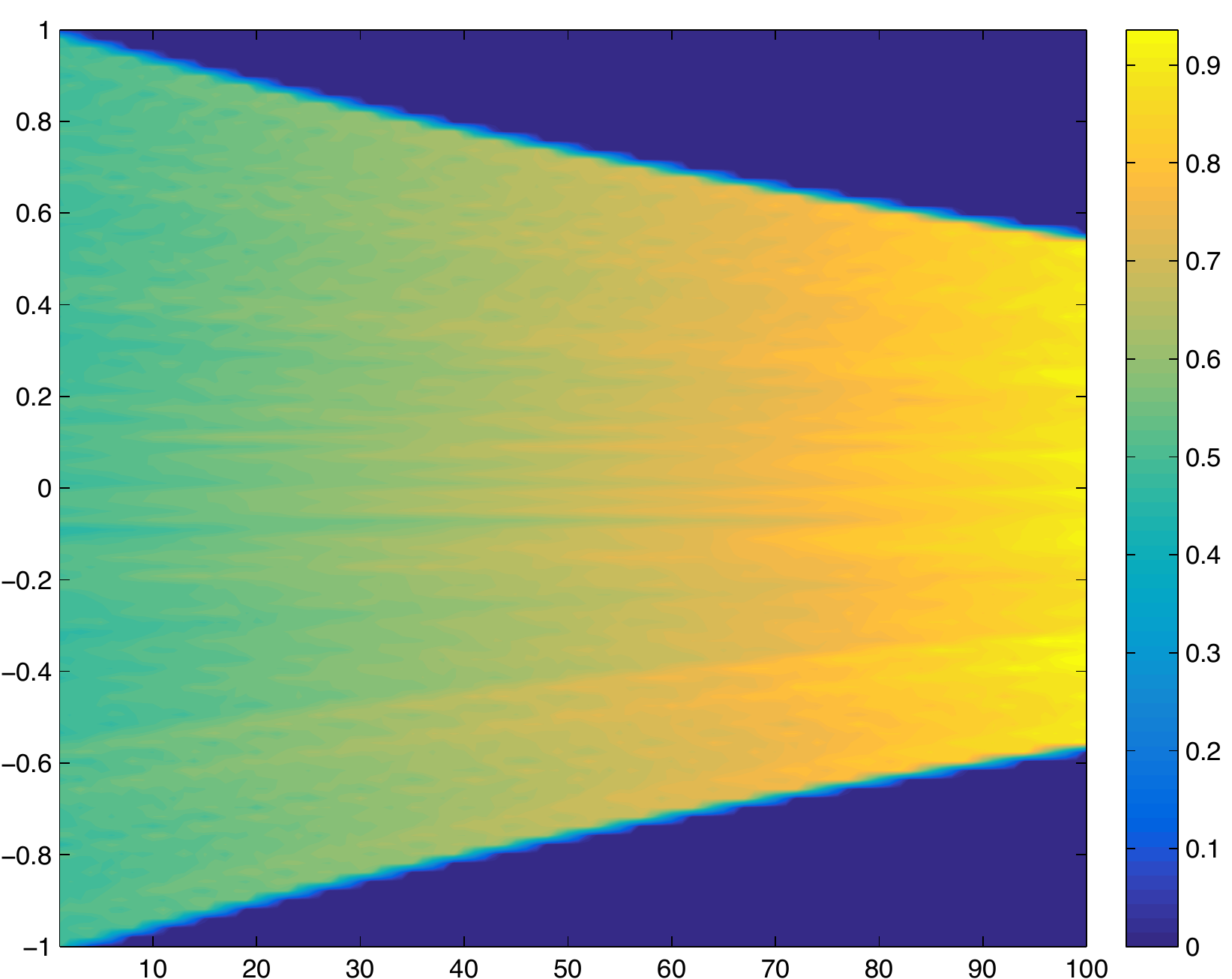}}
\subfigure[$N=7$]{\includegraphics[scale=0.2]{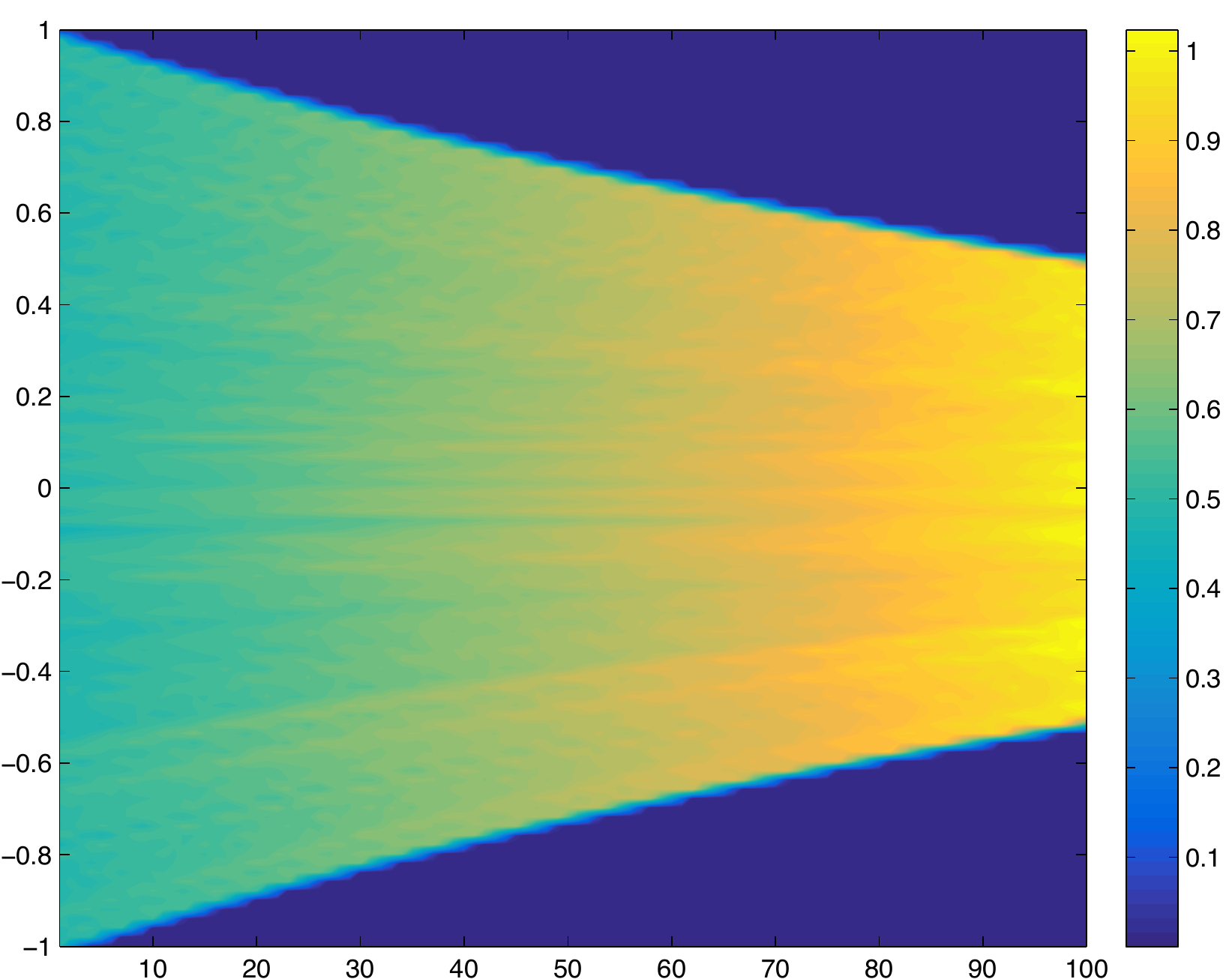}}\\
\subfigure[$N=8$]{\includegraphics[scale=0.2]{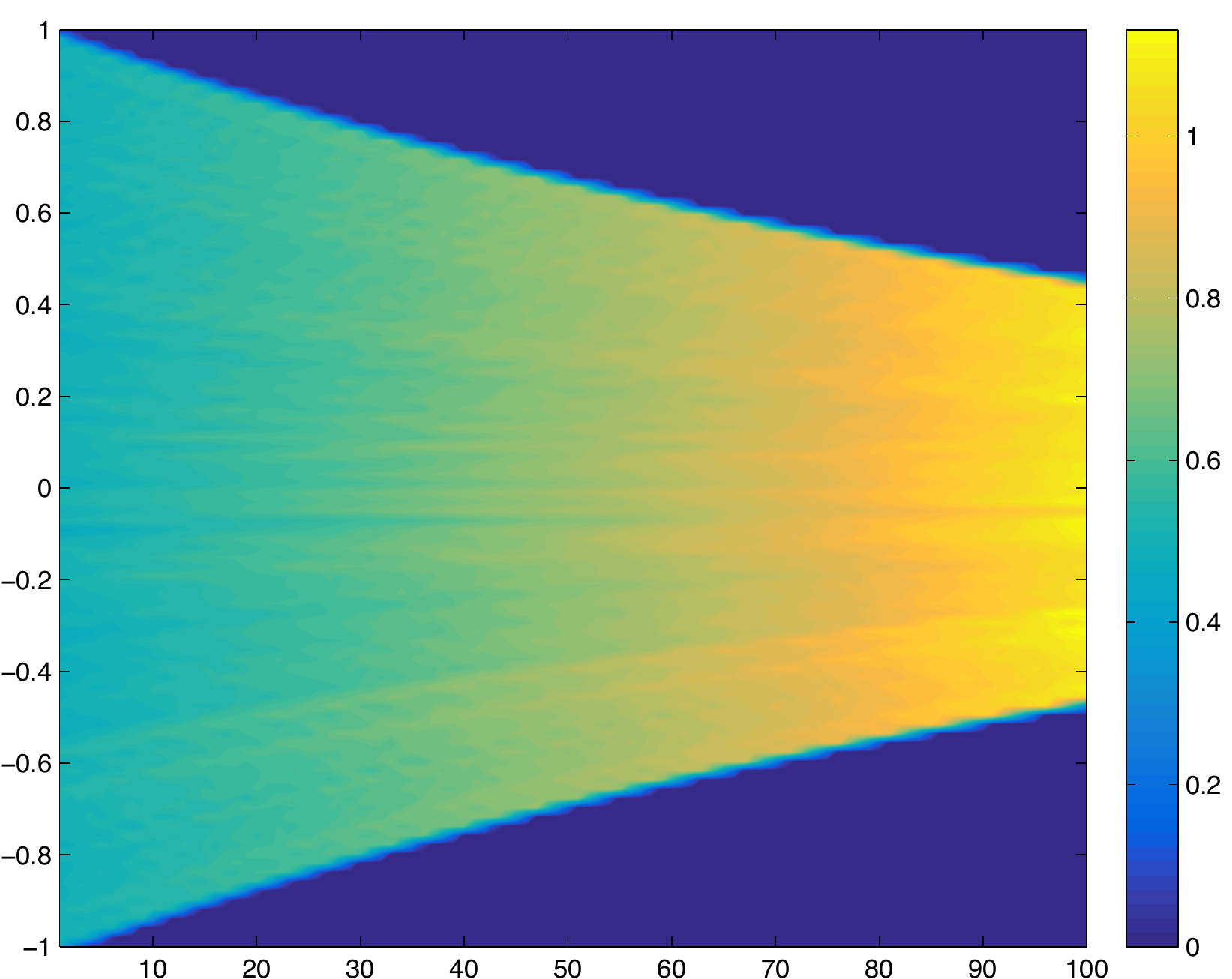}}
\subfigure[$N=9$]{\includegraphics[scale=0.2]{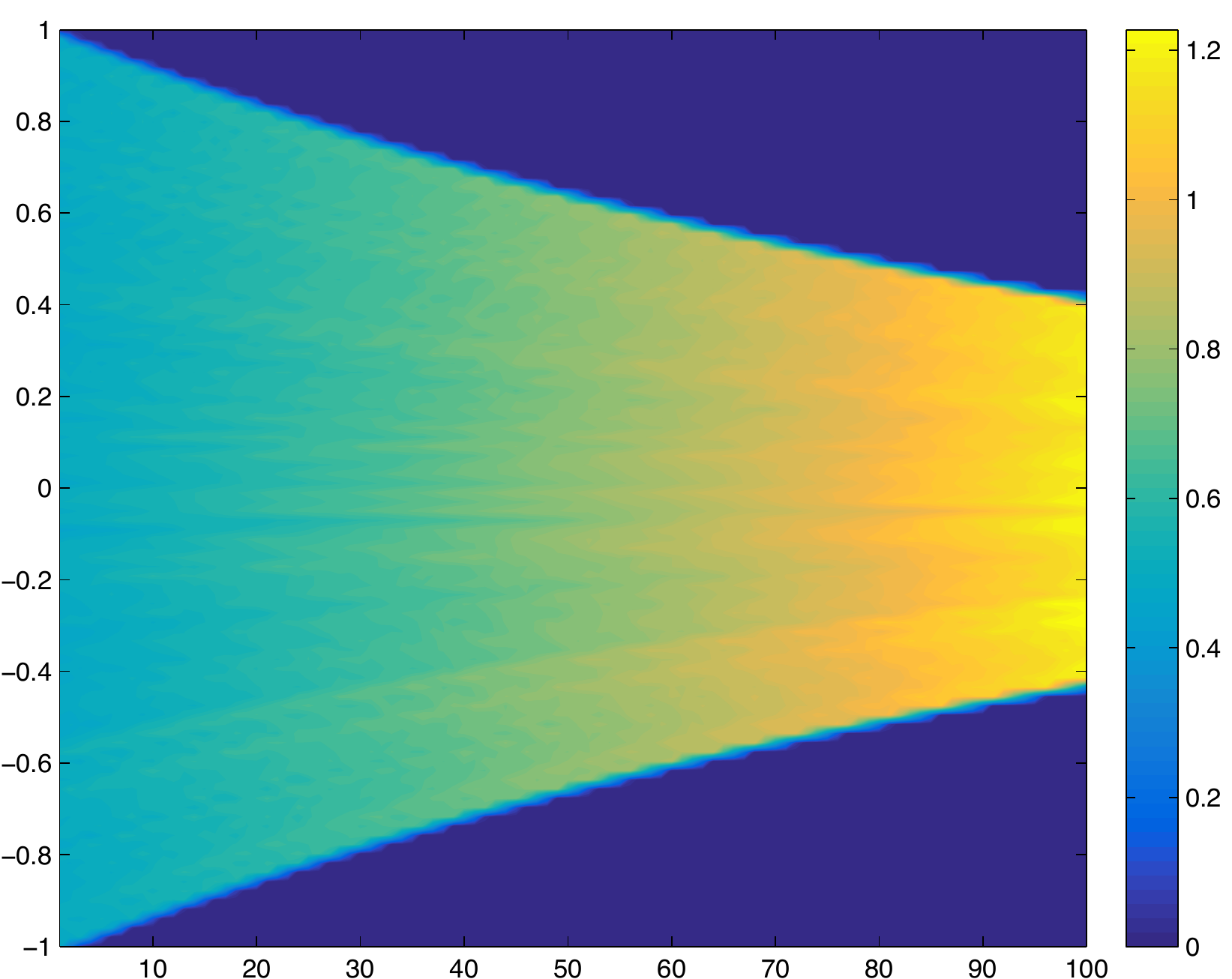}}
\subfigure[$N=10$]{\includegraphics[scale=0.2]{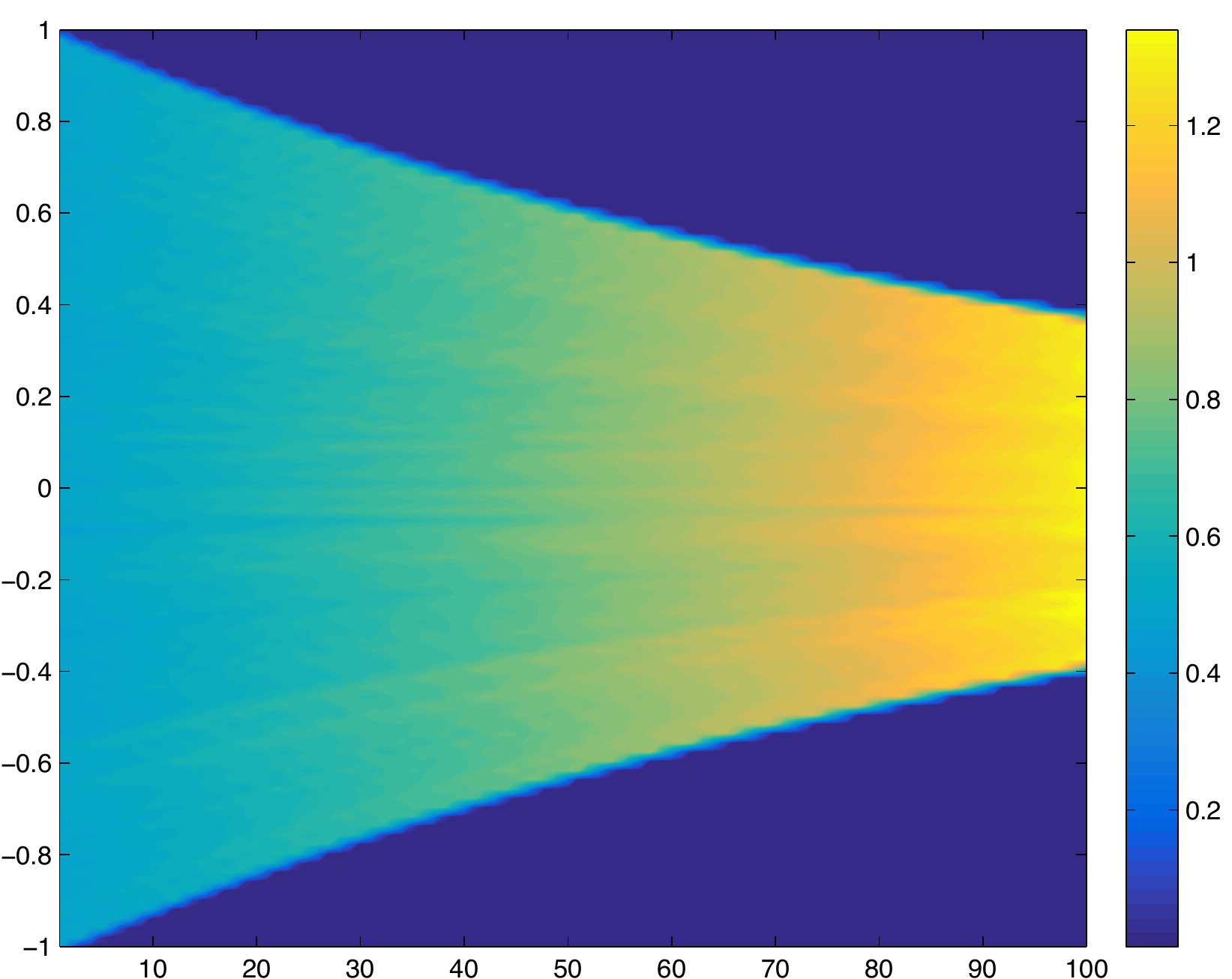}}
\caption{Experimental results for the optimization problem with varying optimization horizon $N$ and regularization constant $\nu=10^3$.}\label{fig:NVar2}
\end{figure}
%

\section{Conclusion}
We have extended the estimates for the suboptimal MPC to the mean-field limit. The derived estimates
yield performance bounds for general symmetric multi--agent dynamics. Except for the assumptions necessary
to obtain the mean-field limit no additional requirements compared to the finite--dimensional theory
are required. The results apply to common agent dynamics modeling for example 
swarming, alignment and economics. We exemplified the theoretical results as well as the 
estimates on a simple opinion formation model.  The stability of the 
mean-field controller is still open and will be investigated in a forthcoming work. Further,
the estimates on $\alpha_N$ are pessimistic due to its generality. It is expected that the bounds can be improved for specific problems as in the finite dimensional case.

\section*{Acknowledgments}
This work has been supported DFG Cluster of Excellence Production Technologies for High--Wage Countries, STE2063/1-1,  and by the 5x1000 Grant provided by the University of Ferrara.

\appendix 
\section{}\label{app:A}
We collect some results of \cite{Cardaliaguet2010aa} for convenience; see also \cite[Theorem 4.1]{BlanchetCarlier2014aa}. 
The Kantorowich--Rubinstein distance $\done(\mu,\nu)$ for measures $\mu,\nu \in \mathcal{P}(Q)$ is given defined
by 
\begin{equation}
\label{def d1}
\done(\mu,\nu) := \sup \{ \int \phi\; d(\mu-\nu); \phi:Q\to \R, \phi \mbox{ is  1 - Lipschitz } \}. 
\end{equation}

\begin{theorem}[Theorem 2.1\cite{Cardaliaguet2010aa}]\label{Theorem2.1Card}
Let $Q^M$ be a compact subset of $\R^M$. 
Consider a sequence of functions $(u_M)_{M=1}^\infty$ with $u_M:Q^{M} \to \R.$  
Assume each $u_M(X)=u_M(x_1,\dots,x_M)$ is a symmetric function in all variables, i.e., 
$$ u_M(X)=u_M( x_{\sigma(1)}, \dots, x_{\sigma(M)} )$$
for any permutation $\sigma$ on $\{1,\dots,M\}.$ Denote by
$\done$ the Kantorowich--Rubenstein distance on the space
of probability measures $\mathcal{P}(Q)$ and let $\omega$ 
be a modulus of continuity independent of $M$. Assume that 
the sequence is uniformly bounded $\| u_M \|_{L^\infty(Q^M)} \leq C$. Further assume that 
for all $X,Y \in Q^M$ and all $M$  we have
$$ | u_M(X)-u_M(Y) | \leq \omega( {\bf d}_1( m^M_X, m^M_Y) )$$
where $m^M_\xi \in \mathcal{P}(Q)$ is defined by $m^M_\xi(x) = \frac{1}M \sum\limits_{i=1}^M \delta(x-\xi_i)$.
\par 
Then there exists a subsequence $(u_{M_k})_k$ of $(u_M)_M$ and a continuous map 
$U:\mathcal{P}(Q) \to \R$ such that 
\begin{equation}
\label{eq:conv sense}
\lim\limits_{k\to \infty} \sup\limits_{ X\in \R^M}  | u_{M_k}(X) - U( m^{M_k}_X) | =0.
\end{equation}
\end{theorem}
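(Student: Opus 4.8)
The plan is to exploit the symmetry hypothesis to factor each $u_M$ through the empirical-measure map, to recast the modulus-of-continuity assumption as uniform equicontinuity of the resulting family on the compact metric space $(\P(Q),\done)$, and then to invoke the Arzel\`a--Ascoli theorem.

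First I would use symmetry to descend $u_M$ to a function on empirical measures. Let $\P_M(Q):=\{m^M_X: X\in Q^M\}\subset\P(Q)$. If $m^M_X=m^M_Y$ then $X$ and $Y$ carry the same multiset of coordinates, so $Y$ is a permutation of $X$, and symmetry gives $u_M(X)=u_M(Y)$; hence $\tilde u_M(m^M_X):=u_M(X)$ is well defined on $\P_M(Q)$. The standing hypothesis $|u_M(X)-u_M(Y)|\le\omega(\done(m^M_X,m^M_Y))$ then says precisely that each $\tilde u_M$ has the single modulus of continuity $\omega$ on $\P_M(Q)$, uniformly in $M$, and $\|\tilde u_M\|_\infty\le C$.

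Next I would extend $\tilde u_M$ to all of $\P(Q)$ without enlarging its modulus. After replacing $\omega$ by its least concave majorant (still a modulus dominating $\omega$, so all hypotheses persist, and finite since $\done$ is bounded on the compact set $\P(Q)$), the McShane-type inf-convolution
\begin{equation*}
\mathbf{u}_M(f):=\inf_{X\in Q^M}\bigl\{\tilde u_M(m^M_X)+\omega(\done(m^M_X,f))\bigr\},\qquad f\in\P(Q),
\end{equation*}
defines a bounded function that agrees with $\tilde u_M$ on $\P_M(Q)$ (the choice $X=Y$ yields the inequality ``$\le$'', while the modulus bound yields ``$\ge$'') and, by subadditivity of the concave $\omega$, again carries modulus $\omega$ on all of $\P(Q)$. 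This is exactly the extension device already used for $\mathbf{V}_M$ in the proof of Proposition \ref{prop1}.

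Finally, since $Q$ is compact the space $(\P(Q),\done)$ is a compact metric space, and the family $(\mathbf{u}_M)_M$ is uniformly bounded and uniformly equicontinuous (common modulus $\omega$). The Arzel\`a--Ascoli theorem then furnishes a subsequence $(\mathbf{u}_{M_k})_k$ converging uniformly on $\P(Q)$ to a continuous $U:\P(Q)\to\R$. Restricting the uniform convergence to empirical measures gives
\begin{equation*}
\sup_{X\in Q^M}|u_{M_k}(X)-U(m^{M_k}_X)|=\sup_{X}|\mathbf{u}_{M_k}(m^{M_k}_X)-U(m^{M_k}_X)|\le\sup_{f\in\P(Q)}|\mathbf{u}_{M_k}(f)-U(f)|\longrightarrow 0,
\end{equation*}
which is the assertion. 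I expect the main obstacle to be the middle step: producing a single globally defined extension on $\P(Q)$ whose modulus is uniform in $M$, so that Arzel\`a--Ascoli applies to one equicontinuous family rather than to functions living on the $M$-dependent domains $\P_M(Q)$. The compactness of $(\P(Q),\done)$ for compact $Q$ is the essential topological input making the extraction possible.
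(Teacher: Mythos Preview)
The paper does not give its own proof of this theorem: it is quoted in Appendix~\ref{app:A} as a result of \cite{Cardaliaguet2010aa}, and the subsequent paragraph only sketches the analogous extension for functions $g(x_i,X_{-i})$ symmetric in $X_{-i}$. Your argument is correct and is in fact the standard proof of the cited result; the inf-convolution extension you write down is precisely the device \eqref{def-glimit} that the paper records (and that Proposition~\ref{prop1} uses for ${\bf V}_M$), and the concluding equicontinuity/Arzel\`a--Ascoli step matches the paper's one-line justification ``$(G_M)_M$ is a sequence of uniformly equi--continuous functions \dots\ therefore $(G_M)_M$ converges''.
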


Theorem \ref{Theorem2.1Card} has been extended to the case of functions
$g(x_i,X_{-i}): \mathcal{X}^M \subset \R^M \to \R$ being symmetric only in $X_{-i}.$ 
Here, $\mathcal{X}$ is a compact subset of $\R.$  The corresponding
result is given in \cite[Section 4]{BlanchetCarlier2014aa} and repeated here for convenience.
For any permutation $\sigma$ of the set $\{1,\dots,M\} \backslash \{i\}$ 
and all $x_i \in \R$ we have 
$g(x_i,X_{-i}) = g(x_i, (x_{\sigma(j)})_{j\not = i} ).$ 
Moreover, there exists a modulus of continuity $\omega$ such that for all $x_i,y_i\in \R$ 
and all $M$ we have
$$ \| g(x_i,X_{-i}) - g(y_i,Y_{-i}) \| \leq \omega( \| x_i-y_i\|) +  \omega( {\bf d}_1( m^{M-1}_{X_{-i}}, m^{M-1}_{Y_{-i}}) ).$$
Further assume that $\| g(X)\|_{L^{\infty}(\R^M)} \leq C$. Then, $g(x_i,X_{-i}): \R^{M}\to\R$ can be extended 
to a function $G_M:\mathcal{X} \times \mathcal{P}(\mathcal{X}) \to \R$ by 
\begin{equation}
\label{def-glimit}
G_M(x,\nu) = \inf\limits_{X_{-i} \in \R^{M-1}} \{ g(x,X_{-i}) +  \omega( {\bf d}_1( m^{M-1}_{X_{-i}}, \nu) ) \}.
\end{equation}
It can be shown as before that $(G_M)_M$ is a sequence of uniformly equi--continuous functions
on $\mathcal{X} \times \mathcal{P}(\mathcal{X}).$ Therefore, $(G_M)_M$ converges to 
 a function $G:\mathcal{X} \times \mathcal{P}(\mathcal{X}),$ see also \cite[Theorem 4.1]{BlanchetCarlier2014aa}.

\section{} \label{sec:mean-field}
In this section we derive a semi discrete mean-field formulation of the constrained problem \eqref{eq:mpc00}. Let us suppose that the introduced control $u=u^{MPC}_n$ is symmetric with respect to each position of the system of agents at time $t^n$. We define the empirical measures 
\begin{equation}
f_M(t^n)=f_M^n = \dfrac{1}{M}\sum_{i=1}^M \delta(x-x_{i,n}),
\end{equation}
where $\delta $ is the Dirac delta, or localizing function, defined in the space of probability measures of $\R^d$, namely $\mathcal{P}(\R^d)$. For any smooth function $\phi\in\mathcal{C}_0^1(\mathbb{R}^d)$ we have
\begin{equation}
\int_{\RR} \phi(x)f^n_M(x)dx=\dfrac{1}{M}\sum_{i=1}^M\phi(x_{i,n}),
\end{equation}
then through a first order Taylor expansion we obtain
\begin{equation}\label{eq:taylor}
\phi(x_{i,n+1})-\phi(x_{i,n})=\phi’(x_{i,n})(x_{i,n+1}-x_{i,n})+O(\Delta t^2)
\end{equation}
Now from the original dynamic \eqref{eq:mpc00} we can replace the quantity $x_{i,n+1}-x_{i,n}$ in \eqref{eq:taylor} obtaining
\begin{equation}
\phi’(x_{i,n})\left[\dfrac{\Delta t}{M}\sum_{j=1}^M P(x_{j,n}-x_{i,n})+\Delta t u_n^{MPC}\right]
\end{equation}
and summing up to $M$ we have
\begin{equation}\label{eq:preback}
\dfrac{1}{M}\sum_{i=1}^M  \phi(x_{i,n+1})-\phi(x_{i,n})=\dfrac{1}{M}\sum_{i=1}^M \phi’(x_{i,n})\left[\dfrac{\Delta t}{M}\sum_{j=1}^M P(x_{j,n}-x_{i,n})+\Delta t u_n^{MPC}\right].
\end{equation}
Given that $f_M^n$ is a probability measure in the space $\mathcal{P}(\RR^d)$ with uniform support with respect to $M$, Prokhorov’s theorem implies that the sequence $( f^n_M)_{M}$ is weakly-* relatively compact, i.e. there exists a subsequence $\left(f^n_{M_m}\right)_m$ and a probability measure $f^n\in \mathcal{P}(\RR^d)$ such that
\begin{equation}\label{eq:w*}
f^n_{M_m} \rightarrow_{w*} f^n
\end{equation}
in $\mathcal{P}(\RR^d)$. Recall that for the Cucker-Smale model the tightness hypothesis is in general satisfied if the initial distribution $f^0_M$ is compactly supported with respect to $M$. For a rigorous proof we refer to \cite{CarrilloFornasierToscani2010aa,DiFrancescoRosiniArchive2015aa}. \\


\end{document}